\newtheorem{lemma}{Lemma}
\newtheorem{definition}{Definition}
\newtheorem{corollary}{Corollary}
\newtheorem{proposition}{Proposition}
\newtheorem{example}{Example}
\newtheorem{theorem}{Theorem}
\newtheorem{question}{Question}
\newcommand{\Z}{\mathbb{Z}}
\newcommand{\N}{\mathbb{N}}
\newcommand{\B}{\mathcal{L}}
\newcommand{\ID}{\mathrm{id}}
\newcommand{\Aut}{\mathrm{Aut}}
\newcommand{\End}{\mathrm{End}}
\newcommand\xqed[1]{%
  \leavevmode\unskip\penalty9999 \hbox{}\nobreak\hfill
  \quad\hbox{#1}}
\newcommand\qee{\xqed{$\triangle$}}
\newcommand{\subg}{\mathrm{sub}_g}
\newcommand{\subm}{\mathrm{sub}_m}
\title{A note on subgroups of automorphism groups of full shifts}
\author{
Ville Salo \\
University of Chile
}
\begin{document}
\maketitle

\begin{abstract}
We discuss the set of subgroups of the automorphism group of a full shift, and submonoids of its endomorphism monoid. We prove closure under direct products in the monoid case, and free products in the group case. We also show that the automorphism group of a full shift embeds in that of an uncountable sofic shift. Some undecidability results are obtained as corollaries.
\end{abstract}


\section{Outline}

Automorphism groups of full shifts were first studied in \cite{He69}, and the transitive SFT case was later studied in more detail in \cite{BoLiRu88}. More results on them, and in particular on the set of subgroups of these groups, were shown in \cite{KiRo90}. In particular, interesting results were shown about the set of subgroups of $\Aut(S^\Z)$. It is known for example that free groups, finite groups and finitely generated abelian groups can be embedded in this group \cite{He69,BoLiRu88}, and so can all locally finite residually finite countable groups \cite{KiRo90}. Other examples are `graph groups'
\[ \langle (g_i)_{i \in \N} \;|\; (g_i g_j = g_j g_i)_{(i, j) \in R} \rangle \]
where $R \subset \N^2$ is arbitrary, fundamental groups of $2$-manifolds, and the free product of all finite groups \cite{KiRo90}. Despite the long list of examples, to our knowledge there is no known characterization of the set of subgroups of $\Aut(S^\Z)$.

In the absense of a characterization, we turn to closure properties. One closure property for this set of groups is proved explicitly in \cite{KiRo90}, namely closure under extensions by finite groups. Another one can be found by a direct application of the ideas of \cite{BoLiRu88,KiRo90}, namely closure under direct sums, and we show this in Theorem~\ref{thm:DirectProducts}. Our main result is that this set of groups is also closed under free products, Theorem~\ref{thm:FreeProducts}, which answers a question I myself asked in \cite{Sa15}.

We also show $\Aut(S^\Z)$ is contained in the automorphism group of every uncountable sofic shift. As corollaries of the results we obtain undecidability results for cellular automata on uncountable sofic shifts.

The `marker method' (the use of unbordered words) usually plays an important role in the construction of cellular automata, and this note makes no exception. However, of even more importance in our constructions is the concept of `conveyor belts'. This technique is implicit in many constructions found in the literature, and variants of it are found for example in \cite{BoLiRu88} and \cite{KiRo90}. We make this idea more explicit.

While our main interest is in automorphism groups, we prove results for the whole endomorphism monoid when possible, since the results about monoids are strictly stronger, and the proofs are typically the same. In some results, however, the fact we consider groups instead of monoids is important. For example, we do not know whether the set of submonoids of the endomorphism monoid of a full shift is closed under free product.

We also show some undecidability results that follow as corollaries from combining existing undecidability results about periodicity from \cite{KaOl08} with our results: in particular we obtain that given two reversible cellular automata on a full shift, it is undecidable whether they generate a free group.

\section{Definitions}

The letters $A$, $B$, $C$ and $S$ stand for finite alphabets. We write $A^*$ for the set of finite words over the alphabet $A$, and $A^+$ the set of nonempty words. The concatenation of two words $u,v$ is written as $u \cdot v$ or simply $uv$, and \emph{points} or \emph{configurations} $x \in A^\Z$ can be written as infinite concatenations
\[ x = \ldots w_{-3} \cdot w_{-2} \cdot w_{-1} \; . \; w_0 \cdot w_1 \cdot w_2 \ldots \]
where the `decimal point' in $w_{-1} \; . \; w_0$ need not be at the origin, but simply denotes some fixed position of the point.

If $u \in A^*$ is any word, write $u^R$ for the reversed word $u^R_i = u_{|u|-1-i}$.

Let $S$ be a finite alphabet. Then $S^\Z$ with the product topology is called a \emph{full shift}. We define the \emph{shift} $\sigma : S^\Z \to S^\Z$ by $\sigma(x)_i = x_{i+1}$. A \emph{subshift} is a topologically closed set $X \subset S^\Z$ satisfying $\sigma(X) = X$.

If $x \in X$ we write $w \sqsubset x$ if $\exists i: w = x_{[i,i+|w|-1]}$. We also write $w \sqsubset X$ and $u \sqsubset v$ for words $u, v$, with obvious meanings, and
\[ \B_n(X) = \{w \in S^n \;|\; w \sqsubset X\}. \]

A \emph{cellular automaton} on a subshift $X \subset S^\Z$ is a continuous function $f : X \to X$ that commutes with $\sigma$. Equivalently, it has a \emph{radius} $r \in \N$ and a \emph{local rule} $F : \B_{2r+1}(X) \to S$ such that $f(x)_i = F(x_{[i-r,i+r]})$ for all $x \in X, i \in \Z$.

For a standard reference on symbolic dynamics, see \cite{LiMa95,Ki98}.

We assume the reader is familiar with groups and monoids, but give the basic definitions to clarify our choice of boundary between properties and structure. A \emph{monoid} $M$ is a countable set together with an associative multiplication operation $(a, b) \mapsto a \cdot b = ab$ such that there is an \emph{identity element} $1_M \in M$ satisfying $a 1_M = 1_M a = a$ for all $a \in M$. A \emph{group} is a monoid $G$ where every element $g \in G$ has an inverse $g^{-1}$ satisfying $g g^{-1} = g^{-1} g = 1_G$. We think of the existence of $g^{-1}$ as simply a property of the element $g$, not as an operation. If $N$ and $M$ are monoids, a \emph{homomorphism} $\phi : N \to M$ is a map satisfying $\phi(a \cdot b) = \phi(a) \cdot \phi(b)$ and $\phi(1_N) = 1_M$

A \emph{submonoid} of a monoid $M$ is a subset of $N$ that contains the identity element $1_M$ of $M$ and is closed under multiplication, so that $N$ obtains a monoid structure from $M$ with $1_N = 1_M$, and we write $N \leq M$. More generally, we write $N \leq M$ if there is an \emph{embedding}, or an injective homomorphism $\phi : N \to M$. A bijective homomorphism is called an \emph{isomorphism}, and we write $M \cong N$ if $M$ and $N$ are isomorphic. A \emph{subgroup} is a submonoid that is a group. If $M$ is a monoid, write $\subm(M)$ for the class of isomorphism classes of its submonoids. We write $\subg(M)$ for the class of isomorphism classes of its subgroups.

The \emph{endomorphism monoid} $\End(X)$ of a subshift $X$ is the monoid of cellular automata on it under function composition, with $1_{\End(X)} = \ID_X$, the identity map on $X$. The \emph{automorphism group} $\Aut(X)$ of $X$ is the subgroup of $\End(X)$ containing the invertible elements:
\[ \Aut(X) = \{ f \in \End(X) \;|\; \exists g \in \End(X): f \circ g = g \circ f = \ID_X \}. \]
By compactness of $X$, this is precisely the set of elements of $\End(X)$ that are bijective.

Our definitions are set up so that the following holds.\footnote{For this, it is (at least a priori) important that the identity element of a subgroup of $\End(X)$ is the identity map -- thus, the identity element must be part of the structure of a monoid, rather than a property.}

\begin{lemma}
Let $G$ be a group such that $G \leq \End(X)$. Then $G \leq \Aut(X)$. In particular, if $\subm(\End(X)) = \subm(\End(Y))$ then $\subg(\Aut(X)) = \subg(\Aut(Y))$.
\end{lemma}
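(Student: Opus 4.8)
The plan is to first establish the main assertion $G \leq \Aut(X)$, and then derive the statement about $\subm$ and $\subg$ as a formal consequence. Suppose $\phi : G \to \End(X)$ is an embedding, i.e.\ an injective homomorphism, of the group $G$. Since $\phi$ is a monoid homomorphism it preserves identities, so $\phi(1_G) = \ID_X$. Now fix $g \in G$ and set $h = \phi(g^{-1})$. Applying $\phi$ to the relations $g g^{-1} = g^{-1} g = 1_G$ and using multiplicativity of $\phi$ gives $\phi(g) \circ h = h \circ \phi(g) = \ID_X$, so $h \in \End(X)$ is a two-sided inverse of $\phi(g)$. By the definition of $\Aut(X)$ this means $\phi(g) \in \Aut(X)$. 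As $g$ was arbitrary, $\phi(G) \subseteq \Aut(X)$, and $\phi$ therefore restricts to an injective homomorphism $G \to \Aut(X)$, witnessing $G \leq \Aut(X)$.

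The only step requiring care --- and the reason the footnote insists the identity element belong to the \emph{structure} of a monoid --- is $\phi(1_G) = \ID_X$. A priori one might worry that a group could embed in $\End(X)$ as a subsemigroup whose identity is some idempotent $e \neq \ID_X$; the images of group elements would then be invertible only ``relative to $e$'' and need not lie in $\Aut(X)$. Insisting that homomorphisms preserve the distinguished identity rules this out, and this is precisely what makes the passage from $\End$ to $\Aut$ automatic for groups. I expect no further obstacle: once the identity is pinned down, the required inverse in $\End(X)$ is supplied for free as $\phi$ of the group-theoretic inverse.

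For the consequence I would characterize $\subg(\Aut(X))$ intrinsically in terms of $\subm(\End(X))$. Being a group is an isomorphism-invariant property of a monoid, so it is meaningful to single out the ``group classes'' inside $\subm(\End(X))$, and I claim
\[ \subg(\Aut(X)) = \{\, [M] \in \subm(\End(X)) \;|\; M \text{ is a group} \,\}. \]
For $\subseteq$, a subgroup $G \leq \Aut(X)$ is in particular a submonoid of $\End(X)$ (since $\Aut(X) \leq \End(X)$) that happens to be a group. For $\supseteq$, if $[M] \in \subm(\End(X))$ with $M$ a group, then $M \leq \End(X)$, so by the main assertion $M \leq \Aut(X)$, giving $[M] \in \subg(\Aut(X))$. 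The right-hand side depends only on $\subm(\End(X))$, so the hypothesis $\subm(\End(X)) = \subm(\End(Y))$ yields $\subg(\Aut(X)) = \subg(\Aut(Y))$ at once.
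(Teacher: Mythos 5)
Your proof is correct and is essentially the argument the paper intends: the paper states this lemma without proof as a direct consequence of its definitions, and your write-up supplies exactly the point its footnote highlights (a monoid embedding must send $1_G$ to $\ID_X$, so $\phi(g^{-1})$ is a two-sided inverse of $\phi(g)$ in $\End(X)$, placing $\phi(g)$ in $\Aut(X)$). Your derivation of the $\subm$/$\subg$ consequence, by characterizing $\subg(\Aut(X))$ as the group classes inside $\subm(\End(X))$, is also sound and is the natural way to make the paper's ``in particular'' precise.
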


The following lemma is useful for defining cellular automata.

\begin{lemma}
Let $X$ be a subshift and let $Y \subset X$ be a subset such that $\sigma(Y) = Y$ and $\overline{Y} = X$. If $f : Y \to Y$ is uniformly continuous and commutes with the shift, then there is a unique continuous map $g : X \to X$ such that $g|Y = f$, and it is a cellular automaton.
\end{lemma}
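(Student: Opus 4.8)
The plan is to treat this as an instance of the classical extension theorem for uniformly continuous maps, and then to upgrade the resulting continuous map to a cellular automaton using nothing more than the paper's definition. Recall that $S^\Z$ with the product topology is compact and metrizable, for instance by the metric $d(x,y) = 2^{-\min\{|i| \;:\; x_i \neq y_i\}}$ (with $d(x,x) = 0$); since $X$ is closed it is compact, hence complete, in the induced metric, and $\sigma$ restricts to a homeomorphism of $X$. The hypothesis $\overline{Y} = X$ says $Y$ is dense in $X$. So the situation is exactly that of a uniformly continuous map from a dense subset of a metric space into a complete metric space, and the work is to run the classical extension argument and then check that the extension inherits shift-commutation and lands inside $X$.

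For existence and uniqueness of the continuous extension I would proceed pointwise. Given $x \in X$, choose any sequence $(y_k)$ in $Y$ with $y_k \to x$; this sequence is Cauchy, and here is the one place uniform continuity is essential: it guarantees that $(f(y_k))$ is again Cauchy, so by completeness of $X$ it converges, and I set $g(x) = \lim_k f(y_k)$. A second uniform-continuity estimate shows the limit is independent of the chosen sequence, so $g$ is well defined, and the same estimates give that $g$ is (uniformly) continuous with $g|Y = f$. That the image lies in $X$ rather than merely in $S^\Z$ is automatic: $g(X) = g(\overline{Y}) \subseteq \overline{g(Y)} = \overline{f(Y)} \subseteq \overline{Y} = X$, using continuity of $g$ and $f(Y) \subseteq Y$. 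Uniqueness is the usual density argument: any continuous $g'$ with $g'|Y = f$ agrees with $g$ on the dense set $Y$, hence everywhere.

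It remains to see that $g$ is a cellular automaton, which by the definition adopted in the paper means precisely that $g$ is continuous and commutes with $\sigma$; continuity is already in hand. For shift-commutation I would again use density: the maps $g \circ \sigma$ and $\sigma \circ g$ are continuous on $X$, and on $Y$ they agree, since for $y \in Y$ we have $\sigma(y) \in Y$ (as $\sigma(Y) = Y$), whence $g(\sigma(y)) = f(\sigma(y)) = \sigma(f(y)) = \sigma(g(y))$ by the shift-commutation of $f$. Two continuous maps agreeing on a dense set coincide, so $g \circ \sigma = \sigma \circ g$ on all of $X$, and $g$ is a cellular automaton. I expect no serious obstacle here; the only subtlety worth flagging is that it is uniform continuity, and not mere continuity, that the extension step requires, and this is exactly what lets one read off a finite radius for $g$. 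Indeed, an alternative route is to extract from uniform continuity a local rule $F : \B_{2r+1}(X) \to S$ directly (using that $\B_n(Y) = \B_n(X)$, which follows from density) and to define $g$ by $g(x)_i = F(x_{[i-r,i+r]})$, exhibiting the radius at once.
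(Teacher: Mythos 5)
Your proof is correct. The paper states this lemma without any proof (it is offered as a standard utility fact), and your argument --- the classical extension theorem for uniformly continuous maps into a complete metric space, followed by density arguments for uniqueness and shift-commutation, with the observation that the paper's definition of cellular automaton requires nothing beyond continuity and shift-commutation --- is precisely the standard argument being taken for granted, with the closing remark about extracting a local rule $F : \mathcal{L}_{2r+1}(X) \to S$ matching the paper's ``equivalently, it has a radius'' formulation.
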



Write $S(A)$ for the set of permutations of $A$, and $S(A,B)$ for the set of all bijections $c : A \to B$. The free group with $m$ generators is written $F_m$, and $F_\infty$ is the free group with a countably infinite set of generators.

The (external) \emph{direct product} of two monoids $M, N$ is the monoid $M \times N$ with operation $(a, b) \cdot (c, d) = (ac, bd)$. The \emph{free product} of monoids $M$ and $N$ is defined up to isomorphism as follows: Let $M \cong \langle 1_M, a_1, a_2, a_3, \ldots \;|\; r_1, r_2, \ldots \rangle$ be a presentation, where each $r_i$ is a relation of the form $u = v$ where $u, v \in \{a_1, a_2, \ldots\}^*$, and similarly let $N \cong \langle 1_N, b_1, b_2, \ldots \;|\; t_1, t_2 \ldots \rangle$. Then
\[ M*N \cong \langle a_1,b_1,a_2,b_2,a_3,b_3,\ldots \;|\; 1_M = 1_N, r_1, t_1, r_2, t_2, r_3, t_3, \ldots \rangle. \]

We also write $M^{k}$ for the direct product of $k$ copies of $M$. There are obvious embeddings $M^{k-1} \rightarrow M^{k}$, and their direct limit is written as $M^{\omega}$. The elements of the countable monoid $M^{\omega}$ are vectors of finite support with component values in $M$. Write $M * M$, $M^{*k}$ and $M^{*\omega}$ for the corresponding concepts for the free product.


\section{Lemmas about submonoids}

\begin{lemma}
Let $M$ and $N$ be monoids. Then $\subm(M) = \subm(N)$ if and only if $M \leq N$ and $N \leq M$.
\end{lemma}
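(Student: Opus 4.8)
The plan is to reduce everything to a single monotonicity observation: an embedding of monoids induces a containment of their classes of submonoid-isomorphism-classes. Concretely, I would first establish the following. Suppose $\phi : M \to N$ is an embedding and $K \leq M$ is any submonoid. Then $\phi(K)$ is a submonoid of $N$: it is closed under multiplication since $\phi$ is a homomorphism, and it contains $1_N = \phi(1_M) \in \phi(K)$ because $K$ contains $1_M$ (a submonoid shares the identity of the ambient monoid). Moreover, since $\phi$ is injective, its restriction $\phi|_K : K \to \phi(K)$ is a bijective homomorphism, hence an isomorphism, so the isomorphism class of $K$ lies in $\subm(N)$. As $K$ was arbitrary, this yields $\subm(M) \subseteq \subm(N)$. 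In short, $M \leq N$ implies $\subm(M) \subseteq \subm(N)$.

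With this in hand the two implications are immediate. For the backward direction, assume $M \leq N$ and $N \leq M$. The first gives $\subm(M) \subseteq \subm(N)$ and the second gives $\subm(N) \subseteq \subm(M)$, so $\subm(M) = \subm(N)$. I would note that, unlike the superficially similar Cantor--Schr\"oder--Bernstein situation, no back-and-forth construction is needed here: because $\subm(-)$ records isomorphism classes rather than a single fixed bijection, the two one-sided containments combine directly without having to splice the two embeddings into one.

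For the forward direction, assume $\subm(M) = \subm(N)$. Since $M$ is a submonoid of itself via the identity embedding, its isomorphism class lies in $\subm(M)$, hence in $\subm(N)$. By definition of $\subm(N)$ this means there is a submonoid $M' \leq N$ with $M' \cong M$; composing this isomorphism with the inclusion $M' \hookrightarrow N$ produces an embedding $M \to N$, that is, $M \leq N$. The symmetric argument, using that $N$ is a submonoid of itself, gives $N \leq M$.

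The steps are all routine verifications from the definitions, so there is no genuine obstacle; the only point requiring care is to keep straight that $\subm(-)$ denotes a class of isomorphism classes and that $\leq$ abbreviates ``embeds into,'' so that the assertion ``$[K] \in \subm(N)$'' is read as exactly ``$K$ embeds into $N$.''
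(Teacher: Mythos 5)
Your proof is correct and is precisely the routine argument the paper has in mind: it states this lemma without proof, treating both directions (an embedding $M \leq N$ induces $\subm(M) \subseteq \subm(N)$, and $[M] \in \subm(N)$ unwinds to an embedding $M \to N$) as immediate from the definitions. Your observation that no Cantor--Schr\"oder--Bernstein-style splicing is needed, since $\subm(-)$ tracks isomorphism classes rather than a single bijection, is also exactly the right way to see why the statement is weaker than $M \cong N$.
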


In particular, to show that two groups have the same subgroups, we only need to show they contain each other as subgroups. We note that the set of subgroups of a group is not a complete invariant for group isomorphism: for example, free groups with different amounts of generators are non-isomorphic, but contain each other as subgroups.

In this note, we concentrate on automorphism groups of full shifts $\Aut(S^\Z)$ where $S$ is a finite alphabet of size at least $2$. It is not known when two such groups $\Aut(A^\Z)$ and $\Aut(B^\Z)$ are isomorphic -- in particular the case $|A| = 2, |B| = 3$ is open. Nevertheless, the set of subgroups is always the same:

\begin{lemma}
\label{lem:SameSubmonoids}
Let $A, B$ be alphabets of size at least $2$. Then
\[ \subm(\End(A^\Z)) = \subm(\End(B^\Z)). \]
\end{lemma}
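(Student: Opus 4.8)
The plan is to invoke the preceding lemma, which reduces the claim $\subm(\End(A^\Z)) = \subm(\End(B^\Z))$ to proving the two embeddings $\End(A^\Z) \leq \End(B^\Z)$ and $\End(B^\Z) \leq \End(A^\Z)$. By symmetry it suffices to give, for arbitrary alphabets with $|A|, |B| \geq 2$, an injective monoid homomorphism $f \mapsto \hat f$ from $\End(A^\Z)$ into $\End(B^\Z)$; applying this with the roles of $A$ and $B$ exchanged then yields both inclusions.

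To build the embedding I would simulate the $A$-dynamics inside marker-delimited regions of a $B$-configuration, using a two-track encoding so that the conveyor belt structure stays local. Fix an unbordered marker word $m \in B^+$ and an integer $n$ large enough that $B^n$ has room to encode a pair of $A$-symbols together with the marker. A \emph{cell} is a block of length $n$, a \emph{region} is a maximal run of cells between two consecutive markers, and each cell stores one symbol for a ``top'' track and one for a ``bottom'' track. A region of $N$ cells is read as a conveyor belt of circumference $2N$: the top track left-to-right followed by the bottom track right-to-left, with the two markers acting as the folds that glue the ends. The essential point is that in this layout every belt-neighbour of a cell is physically close in $B^\Z$ -- consecutive cells along a track are adjacent, and at each fold the top and bottom entries of the boundary cell sit in the same block -- so a radius-$r$ local rule of $f$ induces a local rule on belts of radius $O(rn + |m|)$, applied to the top track by $f$ and to the bottom track by its mirror image. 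Configurations that do not parse into this marker format, or whose regions are too short, are simply fixed.

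I would then define $\hat f$ by this local rule, so that it is manifestly a cellular automaton of bounded radius (alternatively, define it first on the dense shift-invariant set of configurations in which markers occur infinitely often on both sides, and invoke the extension lemma). Because $\hat f$ fixes markers and acts trackwise within each region, applying $\hat g$ and then $\hat f$ realizes $g$ and then $f$ on every belt; since a belt is literally a $\sigma$-periodic point and the belt action is the restriction of the map to such points, this gives $\widehat{f \circ g} = \hat f \circ \hat g$, so $f \mapsto \hat f$ is a monoid homomorphism. Injectivity follows from density of periodic points: if $f \neq g$ then they differ on some periodic point, hence on a belt of some circumference $2N$, which is witnessed by a $B$-configuration carrying a region of $N$ cells, so $\hat f \neq \hat g$. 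Since this argument never uses invertibility of $f$ -- a non-invertible endomorphism still maps periodic points to periodic points -- it establishes the embedding for the whole endomorphism monoid, which is what the statement requires.

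The main obstacle will be making the conveyor belt local rule precise at the folds: one must arrange the reflected action on the bottom track (using the block reversal $u \mapsto u^R$) so that the gluing at both markers is consistent and the induced map genuinely agrees with the action of $f$ on the corresponding periodic point, and one must choose $m$ unbordered so that regions are parsed unambiguously and markers are neither created nor destroyed. Getting these boundary conventions exactly right -- so that functoriality $\widehat{f \circ g} = \hat f \circ \hat g$ holds on the nose rather than only up to edge effects -- is the delicate part; the density-of-periodic-points argument for faithfulness and the verification that $\hat f$ has bounded radius are then routine.
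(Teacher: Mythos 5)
Your proposal is correct, but it takes a different route from the paper. The paper does not prove this lemma by a direct construction between full shifts: it derives it as an immediate corollary of Lemma~\ref{lem:SoficContainsFull}, which embeds $\End(A^\Z)$ into $\End(X)$ for \emph{any} uncountable sofic shift $X$; taking $X = B^\Z$ and symmetrizing, the mutual-embedding criterion (the paper's Lemma on $\subm$) finishes the proof. Your argument is the specialization of that construction to the case where the target is itself a full shift, and this specialization genuinely simplifies matters: you need no syntactic monoid, no synchronizing words, and no entropy/transitivity analysis, because in $B^\Z$ any unbordered word serves as a marker and any block can follow any other. The ingredients you use -- marker-delimited regions read as two-track conveyor belts, the observation that belt-neighbours are physically close so the induced rule has bounded radius, functoriality of the belt action (the paper's Lemma~\ref{lem:ConveyorBelts}), and faithfulness via density of periodic points -- are exactly the paper's toolkit, so your proof is a self-contained, more elementary rendering of the same core technique. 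What the paper's detour buys is the strictly stronger statement about arbitrary uncountable sofic shifts, which it needs later anyway (for the torsion-problem dichotomy in the decidability section); what your version buys is independence from sofic-shift theory and a cleaner marker argument. The one point you rightly flag as delicate -- choosing the cell code so that occurrences of the marker are exactly the intended ones, so markers are neither created nor destroyed and parsing is stable under the maps -- is handled in the paper by the condition that two words of $wUw$ overlap only in $w$; your construction needs the analogous (standard) arrangement, and with it the homomorphism and injectivity verifications go through as you describe.
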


In particular, it follows that $\subg(\End(A^\Z)) = \subg(\End(B^\Z))$ if $|A|,|B| \geq 2$. Lemma~\ref{lem:SameSubmonoids} follows directly from Lemma~\ref{lem:SoficContainsFull}, which we prove in Section~\ref{sec:SoficContainsFull}.

The interesting submonoids and groups are the infinite ones, as shown by the following result, which essentially already appears in \cite{He69}. The proof also illustrates the usefulness of Lemma~\ref{lem:SameSubmonoids}.

\begin{proposition}
\label{prop:Finites}
Let $|A| \geq 2$. Then every finite monoid embeds in $\End(A^\Z)$.
\end{proposition}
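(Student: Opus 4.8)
The plan is to reduce the statement to a single, explicitly constructed alphabet by means of Lemma~\ref{lem:SameSubmonoids}, and then to realize an arbitrary finite monoid inside the endomorphism monoid of a full shift using the simplest possible cellular automata, namely the radius-$0$ maps that act identically in each coordinate.

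First I would recall the monoid version of Cayley's theorem: the left regular representation $a \mapsto (x \mapsto ax)$ embeds any monoid $M$ into the full transformation monoid $T_M$ of all maps $M \to M$. This is a homomorphism, and it is injective precisely because of the identity element, since $a = a \cdot 1_M$ is recovered from the transformation $x \mapsto ax$ by evaluating at $1_M$. It is worth stressing that it is the presence of the identity that makes this work for monoids and not merely for groups. In particular, writing $n = |M|$, every finite monoid embeds into the full transformation monoid $T_n$ on the $n$-element set $S = \{1, \dots, n\}$.

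Next I would embed $T_n$ into $\End(S^\Z)$. For each transformation $\tau : S \to S$, define the radius-$0$ cellular automaton $f_\tau$ with local rule $\tau$, that is, $f_\tau(x)_i = \tau(x_i)$ for all $i \in \Z$. Each $f_\tau$ is continuous and commutes with the shift, so $f_\tau \in \End(S^\Z)$; the identity transformation yields $\ID$; and composing two such maps gives $f_\tau \circ f_\rho = f_{\tau \circ \rho}$, so $\tau \mapsto f_\tau$ is a monoid homomorphism. It is injective because $f_\tau$ and $f_\rho$ already agree on constant configurations only if $\tau = \rho$. Hence $T_n \leq \End(S^\Z)$, and composing with the previous step gives $M \leq \End(S^\Z)$.

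Finally, I would transport this embedding to the prescribed alphabet $A$. If $|M| = 1$ the monoid is trivial and embeds into any $\End(A^\Z)$ via the identity map, so we may assume $n = |M| \geq 2$, whence $|S| \geq 2$. Lemma~\ref{lem:SameSubmonoids} then gives $\subm(\End(S^\Z)) = \subm(\End(A^\Z))$, so that $M \leq \End(S^\Z)$ implies $M \leq \End(A^\Z)$, as desired. I do not anticipate any serious obstacle: the only points requiring care are the use of the identity element in Cayley's theorem, so that the construction genuinely applies to monoids, and the separate treatment of the trivial monoid, which is needed solely to guarantee that the size hypothesis $|S| \geq 2$ of Lemma~\ref{lem:SameSubmonoids} is met.
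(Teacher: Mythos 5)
Your proposal is correct and follows essentially the same route as the paper: Cayley's theorem for monoids to embed $M$ into a transformation monoid, realization of transformations as radius-$0$ cellular automata, and then Lemma~\ref{lem:SameSubmonoids} to pass to the alphabet $A$. You merely make explicit the details the paper leaves implicit, including the (correct) observation that the trivial monoid needs separate handling so that the hypothesis $|S| \geq 2$ of Lemma~\ref{lem:SameSubmonoids} is satisfied.
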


\begin{proof}
Every finite monoid embeds in the transformation monoid of a finite set $S$, and thus in $\End(S^\Z)$. The result then follows from the previous lemma. 
\end{proof}

We note that in general, if $\subm(M)$ is closed under binary direct or free products, then it is closed under the corresponding countable products.

\begin{lemma}
\label{lem:ClosureStuff}
Let $\subm(M) = \subm(N) = \subm(N')$. Then the following are equivalent:
\begin{itemize}
\item $N \times N' \in \subm(M)$,
\item $M \times M \in \subm(M)$,
\item $\subm(M)$ is closed under finite direct products.
\item $M^{\omega} \in \subm(M)$,
\item $\subm(M)$ is closed under countable direct products.
\end{itemize}
The analogous result is true for free products.
\end{lemma}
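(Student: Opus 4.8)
The plan is to split the five conditions into two clusters that are each easily seen to be internally equivalent, and then to bridge the clusters with one genuinely substantive embedding. Throughout I will use the earlier lemma that $\subm(M) = \subm(N)$ is equivalent to $M \leq N$ and $N \leq M$, together with the obvious monotonicity of direct products under embeddings: if $P \leq P'$ and $Q \leq Q'$, then $P \times Q \leq P' \times Q'$ (apply the two embeddings coordinatewise). Since $\subm(M) = \subm(N) = \subm(N')$, we have mutual embeddings $M \leq N \leq M$ and $M \leq N' \leq M$, which I will invoke freely.

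For the first cluster I would show that $(1)$, $(2)$ and $(3)$ are equivalent. The implication $(3) \Rightarrow (2)$ is immediate, since $M \times M$ is a finite product of copies of $M \in \subm(M)$, and $(1) \Leftrightarrow (2)$ follows by sandwiching: $M \times M \leq N \times N' \leq M$ gives $(1) \Rightarrow (2)$, while $N \times N' \leq M \times M \leq M$ gives $(2) \Rightarrow (1)$. For $(2) \Rightarrow (3)$, given $P, Q \in \subm(M)$ we have $P \times Q \leq M \times M \leq M$, and closure under arbitrary finite products follows by induction on the number of factors. The second cluster, $(4) \Leftrightarrow (5)$, is handled in the same way: $(5) \Rightarrow (4)$ because $M^{\omega}$ is a countable product of copies of $M$, and $(4) \Rightarrow (5)$ because a finite-support product of members $P_i \leq M$ embeds coordinatewise into $M^{\omega} \leq M$.

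It remains to bridge the clusters. One direction is trivial: $(4) \Rightarrow (2)$, because $M \times M$ embeds into $M^{\omega}$ on the first two coordinates. The crux is $(2) \Rightarrow (4)$: from a single embedding $\psi : M \times M \to M$ I must manufacture an embedding $M^{\omega} \to M$. I would define $\Phi$ by telescoping, $\Phi(m_0, m_1, m_2, \ldots) = \psi(m_0, \psi(m_1, \psi(m_2, \ldots)))$, making sense of the nested expression on a finite-support sequence by setting the tail to $1$ once all remaining coordinates equal $1$; this is well defined because $\psi(1,1) = 1$, so enlarging the chosen cut-off does not change the value. That $\Phi$ is a homomorphism follows by a finite downward induction from a common cut-off of the two arguments, using at each step that $\psi$ is a homomorphism of $M \times M$, i.e. $\psi(aa', bb') = \psi(a,b)\psi(a',b')$; injectivity follows because $\psi$ is injective on pairs, so $\psi(m_0, T) = \psi(m'_0, T')$ forces $m_0 = m'_0$ and $T = T'$, and one peels off coordinates one at a time until the (equal, all-$1$) tails are reached. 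This is the main obstacle, as it is the only place where the binary hypothesis is genuinely leveraged into an infinite construction.

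Finally, the free-product analogue runs on the same skeleton. The cluster equivalences and the trivial bridge direction go through verbatim, once one knows that the free product is functorial and preserves embeddings: if $\phi : P \to M$ and $\chi : Q \to M$ are injective, then so is the induced map $P * Q \to M * M$, which is a standard normal-form argument. For the substantive direction, from $\psi : M * M \to M$ I would set $j_1 = \psi \circ \iota_1$ and $j_2 = \psi \circ \iota_2$ (where $\iota_1, \iota_2$ are the two canonical inclusions $M \to M * M$) and map the $n$-th free factor of $M^{*\omega}$ by $j_2^{n} j_1$. The induced maps on the finite free products $M^{*N}$ are coherent in $N$, so they assemble into a single $\Phi : M^{*\omega} \to M$, and injectivity of each $M^{*N} \to M$ follows by induction on $N$ from the identification $M^{*N} \cong M * M^{*(N-1)}$, functoriality of $*$, and injectivity of $\psi$.
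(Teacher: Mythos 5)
Your proposal is correct and follows essentially the same route as the paper: both reduce everything to the bridge $M \times M \leq M \Rightarrow M^{\omega} \leq M$ (resp.\ $M * M \leq M \Rightarrow M^{*\omega} \leq M$), and your telescoped map $\Phi(m_0,m_1,\ldots) = \psi(m_0,\psi(m_1,\psi(m_2,\ldots)))$ expands, upon writing $\phi_0 = \psi \circ \iota_1$ and $\phi_1 = \psi \circ \iota_2$, into exactly the paper's embedding $(m_i) \mapsto \prod_i \phi_1^i\phi_0(m_i)$, while your free-product maps $j_2^n j_1$ are literally the paper's $\phi_{1^n 0}$. The only difference is presentational: you verify injectivity and the homomorphism property by direct peeling/induction rather than through the paper's pairwise ``embedding conditions,'' which if anything is a slightly more self-contained check in the monoid setting.
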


\begin{proof}
We give the proof for direct products, the case of free products being similar. The equivalence of the first two conditions and the equivalence of the last two conditions are direct, as is the fact that the last two conditions imply the first two. We show that the third and fourth condition follow from the second.


For this, suppose that $M \times M \leq M$. Let $\phi_0 : M \to M$ and $\phi_1 : M \to M$ be the embeddings giving the embedding $M \times M \leq M$, that is, $\phi_0(M) \cap \phi_1(M) = \{1_M\}$ and $\phi_0(a) \phi_1(b) = \phi_1(b) \phi_0(a)$ for all $a, b \in M$. If this holds for two maps, we say the \emph{embedding conditions} hold for them. For $w \in \{0,1\}^*$ define inductively $\phi_{0w} = \phi_0 \circ \phi_w$ and $\phi_{1w} = \phi_1 \circ \phi_w$. Then all maps $\phi_w$ are embeddings of $M$ into itself.

For $i \in \N$, define $\psi_i = \phi_{1^i0}$. It is easy to show that the embedding conditions hold for $\psi_i$ and $\psi_j$ whenever $i \neq j$. It follows that $(\psi_i)_{i \in [0,k-1]}$ gives an embedding of $M^{k}$ into $M$, and $(\psi_i)_{i \in \N}$ of $M^{\omega}$ into $M$.

The claims for free products are proved analogously, but using a different embedding conditions, namely that the images of the embeddings satisfy no nontrivial relations. 


\end{proof}

The importance of this lemma and Lemma~\ref{lem:SameSubmonoids} is that we do not need to worry about changing the alphabets of our full shifts when proving closure properties or about whether we use finite or countable products.


\section{Conveyor belts}

In our constructions, we will typically embed one automorphism group into another. In practise, this means that in the configurations of one full shift $S^\Z$, we identify subsequences that code (parts of) configurations from another full shift $A^\Z$. In these subsequences, we apply $f \in \End(A^\Z)$. To make this into a homomorphism from $\End(A^\Z)$ to $\End(S^\Z)$, it is important to have natural behavior at the boundary between an area coding (part of) a configuration in $A^\Z$, and an area containing something else. For this, we use conveyor belts.

\begin{definition}
Let $A$ be any alphabet. A \emph{conveyor belt over $A$} is a word over the alphabet $A^2$, that is, $w \in (A^2)^*$. Write $\mathrm{Conv}_{A,k}$ for the set of conveyor belts over $A$ of length $k$, that is $\mathrm{Conv}_{A,k} = (A^2)^k$. For a cellular automaton $f : A^\Z \to A^\Z$ and $k \in \N$, we define a function $f_k : \mathrm{Conv}_{A,k} \to \mathrm{Conv}_{A,k}$ as follows: if $w \in (A^2)^k$, we decompose $w$ as $w = u \times v$ for some $u, v \in A^k$, and we define
\[ f_c(w) = f((u v^R)^\Z)_{[0,k-1]} \times (f((u v^R)^\Z)_{[k,2k-1]})^R. \]
\end{definition}

Applying a CA to a conveyor belt can alternatively be described as applying its local rule on the first track, applying its local rule in reverse on the second track, and gluing the tracks at the borders in the obvious way, as if the word were laid down on a conveyor belt: it is clear that this is essentially the same as applying the CA to a periodic point of even period. The following is then clear.

\begin{lemma}
\label{lem:ConveyorBelts}
The map $f \mapsto f_k$ is a monoid homomorphism from $\End(A^\Z)$ to the monoid of functions on $\mathrm{Conv}_{A,k}$, and
\[ f \mapsto (f_k)_{k \in \N} : \End(A^\Z) \to  \prod_k S(\mathrm{Conv}_{A,k}) \]
is an embedding. Furthermore, the maps $f_k$ are uniformly continuous in the sense that there exists a \emph{radius} $r \in \N$ such that for all $k \in \N$ and $w, w' \in \mathrm{Conv}_{A,k}$ such that
\[ w_{[\max(0,i-r),\min(k-1,i+r)]} = w'_{[\max(0,i-r),\min(k-1,i+r)]}, \]
we have $f_k(w)_i = f_k(w')_i$.
\end{lemma}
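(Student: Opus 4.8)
The plan is to realize each $f_k$ as a conjugate of the restriction of $f$ to a finite set of periodic points; this makes the homomorphism property immediate, after which injectivity follows from density of periodic points and the radius bound from a direct boundary analysis.

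First I would introduce the bijection $\Phi_k : \mathrm{Conv}_{A,k} \to P_k$, where $P_k = \{x \in A^\Z : \sigma^{2k}(x) = x\}$ is the finite set of points of period dividing $2k$, defined by $\Phi_k(u \times v) = (u v^R)^\Z$ (the point whose fundamental domain on $[0,2k-1]$ is $uv^R$). This is a bijection: a point of $P_k$ is determined by its restriction to $[0,2k-1]$, every word of length $2k$ splits uniquely as $u \cdot v^R$, and $|P_k| = |A|^{2k} = |\mathrm{Conv}_{A,k}|$. Since a cellular automaton commutes with $\sigma$, it preserves each $P_k$, so $f|_{P_k} : P_k \to P_k$ is well defined, and unwinding the definition of $f_k$ shows precisely that $\Phi_k \circ f_k = (f|_{P_k}) \circ \Phi_k$, i.e.\ $f_k = \Phi_k^{-1} \circ (f|_{P_k}) \circ \Phi_k$. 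Restriction to an invariant set is a monoid homomorphism $\End(A^\Z) \to T(P_k)$ into the transformation monoid (sending $\ID$ to $\ID$ and $f\circ g$ to $f|_{P_k}\circ g|_{P_k}$), and conjugation by $\Phi_k$ is an isomorphism of transformation monoids, so $f \mapsto f_k$ is a homomorphism. In particular $f_k$ is a permutation whenever $f$ is invertible, e.g.\ for $f \in \Aut(A^\Z)$, which is the case of interest.

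For injectivity of $f \mapsto (f_k)_k$, observe that $\bigcup_k P_k$ is exactly the set of all periodic points (any period $p$ divides $2p$), which is dense in $A^\Z$. If $(f_k)_k = (g_k)_k$, then conjugating back by $\Phi_k$ gives $f|_{P_k} = g|_{P_k}$ for every $k$, so $f$ and $g$ agree on the dense set $\bigcup_k P_k$; being continuous, $f = g$.

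The main work, and the main obstacle, is the uniform radius bound. I would take $r$ to be a radius of $f$ and argue it works for every $k$ simultaneously. Fix a top-track output coordinate $i \in [0,k-1]$; then $f_k(w)_i$ is read off $f(\Phi_k(w))$ at coordinate $i$, hence depends only on $\Phi_k(w)$ restricted to the window $[i-r,i+r]$ of the doubled periodic point. The purpose of reversing the second track is that the gluing is \emph{end-to-end}: top coordinate $0$ sits next to bottom coordinate $0$, and top coordinate $k-1$ next to bottom coordinate $k-1$. Consequently, when $[i-r,i+r]$ runs off either end of the top track it only reaches bottom-track entries at conveyor positions still lying in $[\max(0,i-r),\min(k-1,i+r)]$, and the symmetric statement holds for bottom-track outputs. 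Thus $f_k(w)_i$ depends only on the pairs $w_j$ for $j \in [\max(0,i-r),\min(k-1,i+r)]$, which is the claim. Checking the inclusions of index ranges through the wraparound is the one delicate point; for $k < r$ the asserted window already equals the whole belt, so the bound holds trivially.
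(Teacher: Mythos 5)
Your proof is correct and takes essentially the same route as the paper: the paper treats the lemma as immediate from the observation that applying a CA to a conveyor belt is the same as applying it to a periodic point of even period, which is precisely the conjugation $f_k = \Phi_k^{-1} \circ (f|_{P_k}) \circ \Phi_k$ that you make explicit. The details you supply (density of periodic points for injectivity, the wraparound analysis for the uniform radius, and the remark that $f_k$ is a genuine permutation only when $f$ is invertible) are exactly what the paper leaves to the reader.
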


The radius in the lemma can be taken to be just the usual radius of $f \in \Aut(A^\Z)$.

\section{Direct products}

\begin{theorem}
\label{thm:DirectProducts}
If $|S| \geq 2$, $\subm(\End(S^\Z))$ is closed under direct products.
\end{theorem}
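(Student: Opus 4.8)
The plan is to use the two closure lemmas to reduce the statement to a single explicit embedding, and then to build that embedding with conveyor belts. Fix an alphabet $A$ with $|A| \geq 2$ and set $M = \End(S^\Z)$, $N = N' = \End(A^\Z)$. By Lemma~\ref{lem:SameSubmonoids} we have $\subm(N) = \subm(N') = \subm(M)$, so the hypotheses of Lemma~\ref{lem:ClosureStuff} are met, and it suffices to verify its first bullet, namely $N \times N' \in \subm(M)$; the equivalences in that lemma then give closure of $\subm(\End(S^\Z))$ under both finite and countable direct products, for every $S$ with $|S| \geq 2$. To verify the bullet it is enough, again by Lemma~\ref{lem:SameSubmonoids}, to exhibit an embedding $\End(A^\Z) \times \End(A^\Z) \hookrightarrow \End(S'^\Z)$ for one conveniently chosen alphabet $S'$ with $|S'| \geq 2$, since $\subm(\End(S'^\Z)) = \subm(M)$. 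I would take $S' = \{\#\} \cup A^4$, reading each non-marker cell as carrying two independent conveyor-belt cells via $A^4 = A^2 \times A^2$.

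For the construction, call a maximal run of non-$\#$ symbols a \emph{region}; a region of length $k$ then carries two conveyor belts over $A$ of length $k$, given by its first and second $A^2$-coordinates. Given $f \in \End(A^\Z)$, let $\phi_0(f)$ apply the conveyor-belt map $f_k$ of Lemma~\ref{lem:ConveyorBelts} to the first belt of each length-$k$ region, leaving the second belt and all markers fixed; symmetrically, let $\phi_1(g)$ act by $g_k$ on the second belt of each region. To see these are genuine cellular automata I would first define them on the dense, shift-invariant set of configurations in which $\#$ occurs infinitely often in both directions, so that every region is finite and $f_k$ applies verbatim. The uniform radius $r$ supplied by Lemma~\ref{lem:ConveyorBelts} makes the resulting maps uniformly continuous and shift-commuting, so the Section~2 lemma on extending such maps from a dense shift-invariant subset produces unique cellular automata on all of $S'^\Z$, with the behavior on any semi-infinite or bi-infinite region pinned down by continuity.

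Finally I would check the embedding conditions required by Lemma~\ref{lem:ClosureStuff}. That $\phi_0$ and $\phi_1$ are monoid homomorphisms is inherited region-by-region from the fact that $f \mapsto f_k$ is a homomorphism (Lemma~\ref{lem:ConveyorBelts}); injectivity follows from the faithfulness clause of the same lemma, since $f \neq f'$ gives $f_k \neq f'_k$ for some $k$, and placing a separating length-$k$ belt as the first belt of a single region flanked by markers makes $\phi_0(f)$ and $\phi_0(f')$ differ. The two images commute because $\phi_0(f)$ rewrites only first-belt coordinates while $\phi_1(g)$ rewrites only second-belt coordinates, and they intersect trivially: if $\phi_0(f) = \phi_1(g)$, this common map fixes both belt coordinates everywhere, forcing $f_k = g_k = \ID$ for all $k$ and hence $f = g = \ID$.

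The genuinely delicate point is the boundary behavior of the belt action, but this is exactly what the conveyor-belt formalism was designed to absorb: once Lemma~\ref{lem:ConveyorBelts} provides a uniform radius together with faithfulness, the gluing at region borders and the treatment of infinite regions are handled automatically by the extension lemma. I therefore expect the only real care to lie in bookkeeping the region decomposition and in confirming that disjointness of the two belt coordinates yields commutativity and trivial intersection on the nose.
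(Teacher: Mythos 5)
Your proposal is correct and takes essentially the same approach as the paper: the same reduction through Lemma~\ref{lem:SameSubmonoids} and Lemma~\ref{lem:ClosureStuff}, followed by a conveyor-belt embedding (Lemma~\ref{lem:ConveyorBelts}) defined on a dense shift-invariant set of configurations with finite runs and extended to a cellular automaton by uniform continuity. The only deviation is cosmetic---the paper chooses $S = A^2 \sqcup B^2$ so that each factor's runs serve as delimiters for the other factor's belts, whereas you delimit regions by an explicit marker $\#$ and stack the two factors as independent tracks via $A^4 = A^2 \times A^2$---and in both cases commutation and trivial intersection follow from the same disjointness argument.
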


\begin{proof}
By Lemma~\ref{lem:SameSubmonoids} and Lemma~\ref{lem:ClosureStuff}, it is enough to show that for two disjoint alphabets $A$ and $B$, the monoid $\End(A^\Z) \times \End(B^\Z)$ embeds into $\End(S^\Z)$ for some alphabet $S$. We choose $S = A^2 \sqcup B^2$. To prove the claim, it is enough to give embeddings $f \mapsto f_A : \End(A^\Z) \to \End(S^\Z)$ and $f \mapsto f_B : \End(B^\Z) \to \End(S^\Z)$ such that $f_A \circ g_B = g_B \circ f_A$ for all $f \in \End(A^\Z)$ and $g \in \End(B^\Z)$, and $f_A = g_B \implies f = \ID_{A^\Z} \wedge g = \ID_{B^\Z}$.

Let $f \in \End(A^\Z)$. Write $Y$ for the set of points $x \in (A^2 \sqcup B^2)$ which are not left or right asymptotic to a point over $A^2$ or $B^2$, in other words, points where all continuous runs over one of the subalphabets $A^2$ or $B^2$ are finite. If $x \in Y$, then we can write
\[ x = \ldots w_{-3} \cdot w_{-2} \cdot w_{-1} \; . \; w_0 \cdot w_1 \cdot w_2 \ldots \]
where $w_{2i} \in (A^2)^+$ and $w_{2i+1} \in (B^2)^+$ for all $i \in \Z$ (and the decimal point need not be at the origin), and we define
\[ f_A(x) = \cdots w_{-3} \cdot f_{|w_{-2}|}(w_{-2}) \cdot w_{-1} \; . \; f_{|w_0|}(w_0) \cdot w_1 \cdot f_{|w_2|}(w_2) \cdot w_3 \cdots \]
(where the decimal point is in the same position as in $x$). It is clear that $f_A$ commutes with the shift on $Y$. From Lemma~\ref{lem:ConveyorBelts}, it follows that $f_A$ is uniformly continuous on $Y$, and thus extends uniquely to a cellular automaton on $X$. This gives a function from $\End(A^\Z)$ to $\End(S^\Z)$. From Lemma~\ref{lem:ConveyorBelts}, it easily follows that this is an embedding.

We symmetrically construct a homomorphism from $\Aut(B^\Z)$ to $\Aut(S^\Z)$ by rewriting the contiguous segments over $B^2$. It is clear that these mappings commute, and since $f_A$ fixes all symbols from $B^2$ and $g_B$ all symbols from $A^2$, only the identity map is in the image of both embeddings. 
\end{proof}

\begin{corollary}
If $|S| \geq 2$, $\subg(\Aut(S^\Z))$ is closed under direct products.
\end{corollary}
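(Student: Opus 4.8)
The plan is to reduce the group statement directly to the monoid statement of Theorem~\ref{thm:DirectProducts}, exploiting the fact that a group which embeds as a \emph{submonoid} of an endomorphism monoid automatically embeds as a \emph{subgroup} of the corresponding automorphism group. Concretely, I would start with two groups $G, H \in \subg(\Aut(S^\Z))$. Since $\Aut(S^\Z)$ is by definition a submonoid of $\End(S^\Z)$, both $G$ and $H$ are submonoids of $\End(S^\Z)$, so $G, H \in \subm(\End(S^\Z))$.

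Next I would apply Theorem~\ref{thm:DirectProducts}: since $\subm(\End(S^\Z))$ is closed under direct products, we obtain $G \times H \in \subm(\End(S^\Z))$, i.e. a monoid embedding $\phi : G \times H \to \End(S^\Z)$. The direct product of two groups is again a group, so $G \times H$ is a group embedded as a submonoid $G \times H \leq \End(S^\Z)$.

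Finally, I would invoke the first lemma of the \emph{Definitions} section, which states that any group embedding as a submonoid of $\End(X)$ in fact embeds into $\Aut(X)$. The image $\phi(G \times H)$ is a submonoid of $\End(S^\Z)$ isomorphic to the group $G \times H$; since every element of a group has an inverse within that group, each element of $\phi(G \times H)$ is invertible in $\End(S^\Z)$ and hence lies in $\Aut(S^\Z)$. Therefore $G \times H \leq \Aut(S^\Z)$, that is, $G \times H \in \subg(\Aut(S^\Z))$, as required.

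I do not expect any real obstacle here: the corollary is a short deduction rather than a fresh construction. The only point requiring care — and the reason the group case needs no separate conveyor-belt argument — is the passage from ``submonoid of $\End$'' to ``subgroup of $\Aut$''. This relies on the convention, emphasized earlier in the paper, that the identity element of a submonoid is the genuine identity map $\ID_{S^\Z}$, so that invertibility within the embedded copy of $G \times H$ really does witness invertibility in the ambient monoid. Given that convention, the argument is immediate.
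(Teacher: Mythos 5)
Your proof is correct and is exactly the deduction the paper intends: the corollary follows from Theorem~\ref{thm:DirectProducts} together with the lemma that any group embedding as a submonoid of $\End(X)$ in fact embeds in $\Aut(X)$, which is precisely why that lemma (and the footnoted convention about identity elements) was set up in the Definitions section. No difference in approach; the paper simply leaves this short argument implicit.
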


\section{Free products}

In this section, we prove our main theorem:

\begin{theorem}
\label{thm:FreeProducts}
If $|S| \geq 2$, $\subg(\Aut(S^\Z))$ is closed under free products.
\end{theorem}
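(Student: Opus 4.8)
The plan is to mirror the reduction used in Theorem~\ref{thm:DirectProducts}. By Lemma~\ref{lem:ClosureStuff} and Lemma~\ref{lem:SameSubmonoids} it suffices, given two disjoint alphabets $A$ and $B$, to embed the free product $\Aut(A^\Z) * \Aut(B^\Z)$ into $\Aut(S^\Z)$ for a single suitable alphabet $S$. As before I would work over an alphabet supporting conveyor belts over both $A$ and $B$, let $f \in \Aut(A^\Z)$ act on the $A$-material and $g \in \Aut(B^\Z)$ on the $B$-material through the homomorphisms $f \mapsto f_k$ of Lemma~\ref{lem:ConveyorBelts}, and extend to cellular automata using the uniform continuity proved there. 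Injectivity of the two resulting homomorphisms $f \mapsto f_A$ and $g \mapsto g_B$ would follow exactly as in the direct product case, since a single belt already detects a nontrivial $f$.

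The essential new difficulty is that we must now \emph{destroy} commutation rather than secure it: the images of the two groups must satisfy no nontrivial relation, i.e.\ for every reduced alternating word $c_1 c_2 \cdots c_n$ with each $c_i$ nontrivial and lying alternately in the two images, the composite must not be $\ID_{S^\Z}$. The direct product construction fails precisely here, and the reason is instructive: there the $A$-belts and $B$-belts occupy disjoint cells whose $A/B$ partition is preserved by both actions, so $f_A$ only rewrites $A$-symbols, $g_B$ only rewrites $B$-symbols, and they commute. In fact \emph{any} scheme in which the two actions preserve the partition of positions into $A$-type and $B$-type will collapse alternating words. Hence the actions must be able to alter the boundaries between $A$-regions and $B$-regions, and since $f$ by itself knows nothing of $B$, this forces the configurations to carry an explicit marker skeleton, built from unbordered words (the marker method), which the automorphisms manipulate in addition to acting on belt interiors.

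To organize the no-relations argument I would use a ping-pong (table-tennis) scheme on $S^\Z$: identify two disjoint nonempty sets of configurations $X_A$ and $X_B$, read off from the marker skeleton as an ``$A$-phase'' and a ``$B$-phase'', such that every nontrivial element in the image of $\Aut(A^\Z)$ carries $X_B$ into $X_A$ and every nontrivial element in the image of $\Aut(B^\Z)$ carries $X_A$ into $X_B$. Since both groups are infinite, the ping-pong lemma for free products then yields $\langle \Aut(A^\Z), \Aut(B^\Z)\rangle \cong \Aut(A^\Z) * \Aut(B^\Z)$. The design goal is that applying a nontrivial $A$-automorphism advances the skeleton across one marker, recording the phase switch, so that a reduced alternating word leaves behind an irreducible trace in the marker structure that a suitable witness configuration cannot return to its start.

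The main obstacle is exactly the construction of this interfering action so that the ping-pong inclusions hold while each map remains a well-defined, reversible cellular automaton and $f \mapsto f_A$ stays a homomorphism. Two tensions deserve care. First, a nontrivial $A$-automorphism must have a visible effect even on a configuration whose local content is entirely of $B$-type, for otherwise it fixes $X_B$ and ping-pong collapses; this is what forces the action onto the marker layer rather than only the belt interiors. Second, moving markers endangers both reversibility and multiplicativity, so the bookkeeping on the skeleton must be packaged as an invertible operation that commutes cleanly with the conveyor-belt action on the interiors and composes correctly. I expect the verification that the resulting maps are genuine automorphisms satisfying the ping-pong conditions to be the heart of the proof; the embedding of each individual group and the reduction to one pair of alphabets are routine given the earlier lemmas.
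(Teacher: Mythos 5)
Your opening reduction (via Lemma~\ref{lem:SameSubmonoids} to embedding $\Aut(A^\Z) * \Aut(B^\Z)$ into a single $\Aut(S^\Z)$, with conveyor belts giving the two individual embeddings) matches the paper. But the heart of the proof --- constructing the interacting embeddings and verifying that no reduced alternating word acts as the identity --- is exactly what you defer, and the two structural claims you offer to guide that construction both fail. First, it is not true that ``any scheme in which the two actions preserve the partition of positions into $A$-type and $B$-type will collapse alternating words.'' The paper's construction preserves this partition completely: it takes $S = C \sqcup A^2 \sqcup B^2$ with $C = S(A^2,B^2)$, and no cell ever changes type, so the marker skeleton is entirely static. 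What breaks commutation is not moved boundaries but \emph{cross-writing}: $f_A$ modifies the rightmost cell of a $B^2$-belt that sits immediately to the left of a single $C$-symbol $c$ followed by an $A^2$-belt, adding to it (in an abelian group structure fixed on $B^2$) the difference $c((u_{i+2})_0) - c((w_{i+2})_0)$ between the images under $c$ of the new and old leftmost symbols of that $A$-belt. So $f_A$ writes into $B$-cells while preserving the partition, and symmetrically for $g_B$; your dichotomy (either preserve the partition and commute, or else move boundaries) misses this third possibility, which is the paper's key idea, and it sends you toward a moving-skeleton construction whose difficulties you acknowledge but do not resolve.

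Second, your proposed ping-pong mechanism is internally inconsistent with $f \mapsto f_A$ being a homomorphism. If every nontrivial element of the image $G_A$ ``advances the skeleton across one marker,'' then for nontrivial $f, g \in \Aut(A^\Z)$ with $fg \neq \ID$ the map $(fg)_A = f_A \circ g_A$ would have to advance the skeleton by one marker and by two markers simultaneously; likewise any nontrivial involution (a symbol swap, say) would have to advance the skeleton yet square to the identity. So the displacement cannot be uniform over nontrivial elements, and without it your sets $X_A$, $X_B$ have no definition; ping-pong as such is not ruled out, but you have not produced an action satisfying its hypotheses. The paper avoids ping-pong entirely and replaces it with a radius-growth argument: to show a reduced alternating word $f = f_{k-1} \circ \cdots \circ f_0$ is not the identity, it observes that under all maps in $\langle G_A, G_B\rangle$ information crosses $C$-symbols only from right to left, one $C$-symbol per applied factor, and it builds two configurations $x, x'$ consisting of alternating $A$- and $B$-belts $u_{k-1} c_{k-2} u_{k-2} \cdots c_1 u_1 c_0 u_0$ separated by single $C$-symbols, differing in exactly one cell at position $h = \sum_i r_i + 2k$, such that $f(x)_0 \neq f(x')_0$: each factor $f_i$ pushes the difference from the right end of belt $u_i$ to its left end (conveyor-belt trick), and the symbol $c_i$ is chosen to convert that difference into exactly the seed difference at the right end of belt $u_{i+1}$ that the next factor will propagate. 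Hence $f$ has radius at least $h$, so $f \neq \ID$. Nothing in your proposal supplies an equivalent of this step.
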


In the direct product case, we had two kinds of conveyor belts, and we applied the $f_A$ and $g_B$ maps completely independently on both of them. To embed a free product, the idea is to have these conveyor belts talk to each other, so that any alternating product of elements from the $(f \mapsto f_A)$-embedding and the $(g \mapsto g_B)$-embedding can transmit information arbitrarily far over an alternating sequence of $A$-conveyor belts and $B$-conveyor belts, as long as the lengths and contents of these belts are chosen suitably.

For this, we increase the size of our alphabet, and add a $C$-component that allows us to transmit any kind of modification of the left end of an $A$-conveyor belt to any kind of modification of the right end of a $B$-conveyor belt on its left, and vice versa. Let us proceed to the details.

\begin{proof}
By Lemma~\ref{lem:SameSubmonoids}, it is enough to show how to embed the free product of $\Aut(A^\Z)$ and $\Aut(B^\Z)$ into $\Aut(S^\Z)$, where $A$ and $B$ are two disjoint alphabets with the same cardinality and $S$ is an alphabet of our choosing. We choose an arbitrary abelian group structure on both $A^2$ and $B^2$. Let $C = S(A^2, B^2)$, and choose the alphabet
\[ S = C \sqcup A^2 \sqcup B^2. \]

Similarly as in the proof of Theorem~\ref{thm:DirectProducts}, for each $f \in \Aut(A^\Z)$, we will define a CA $f_A : S^\Z \to S^\Z$ so that $f \mapsto f_A$ embeds $\Aut(A^\Z)$ into $\Aut(S^\Z)$ as a subgroup $G_A \leq \Aut(S^\Z)$. An embedding $f \mapsto f_B$ of $B^\Z$ into $G_B \leq S^\Z$ is defined symmetrically by swapping the roles of $A$ and $B$, and $G_A \cap G_B = \{\ID\}$ will be clear from the construction. To show they generate the free product of $\Aut(A^\Z)$ and $\Aut(B^\Z)$ in $\Aut(S^\Z)$, we must show there are no non-trivial relations between elements of $G_A$ and $G_B$. Because the roles of $A$ and $B$ are symmetric, this will follow from showing that for any
\[ f = f_{k-1} \circ f_{k-2} \circ \cdots \circ f_1 \circ f_0 \in \Aut(S^\Z) \]
where $k \geq 1$, and $f_i \in G_A \setminus \{\ID\}$ for even $i$ and $f_i \in G_B \setminus \{\ID\}$ for odd $i$, we have $f \neq \ID$.

Let $f \in \Aut(A^\Z)$. Let $Y$ be the set of points where contiguous runs over each of the subalphabets $A^2$, $B^2$ and $C$ are finite. The set $Y$ is shift-invariant, so to define cellular automata on $S^\Z$, it is enough to define shift-invariant uniformly continuous maps on $Y$. Let thus $x \in Y$, so that
\[ x = \ldots w_{-2} \cdot w_{-1} \; . \; w_0 \cdot w_1 \cdot w_2 \ldots \]
where for all $i \in \Z$, $w_i \in (A^2)^+$, $w_i \in (B^2)^+$ or $w_i \in C^+$, and $w_i$ and $w_{i+1}$ are over different alphabets. We write
\[ f(x) = \ldots u_{-2} \cdot u_{-1} \; . \; u_0 \cdot u_1 \cdot u_2 \ldots, \]
where $|u_i| = |w_i|$ for all $i$. If $w_i \in C^+$, we let $u_i = w_i$. If $w_i \in (A^2)^+$, we apply $f$ to the conveyor belt $w_i$ as in the proof of Theorem~\ref{thm:DirectProducts}, and let $u_i = f_{|w_i|}(w_i)$.

Finally, let us define $u_i$ for $w_i \in (B^2)^+$. Suppose $|u_j| = \ell$ First, $(u_i)_j = (w_i)_j$ for $j \in [0,\ell-2]$. If $w_{i+1} \notin C$ (that is, either $w_{i+1} \notin C^+$ or $w_{i+1} \in C^+$ but $|w_{i+1}| > 1$) or $w_{i+2} \notin (A^2)^+$, we also let $(u_i)_{\ell-1} = (w_i)_{\ell-1}$. If $w_{i+2} \in (A^2)^+$ and $w_{i+1} = c \in C$, we let
\[ (u_i)_{\ell - 1} = (w_i)_{\ell - 1} - c((w_{i+2})_0) + c((u_{i+2})_0),\]
where addition is performed with respect to the abelian group structure of $B^2$. It is easy to see that this defines $f_A$ on $Y$, and since the map defined is uniformly continuous and shift-commuting, we can extend it in a unique way to a CA $f_A : S^\Z \to S^\Z$. If $f \in \Aut(A^\Z)$, it has an inverse $f^{-1} \in \Aut(A^\Z)$, and it is easy to see that $f_A|_Y \circ f_A^{-1}|_Y = f_A^{-1}|_Y \circ f_A|_Y = \ID_Y$. It follows that $f_A \in \Aut(A^\Z)$, because the only extension of the identity map on $Y$ to a CA on $S^\Z$ is the identity map.

For $g \in \Aut(B^\Z)$, the map $g_B : S^\Z \to S^\Z$ is defined symmetrically, modifying the rightmost symbol of a word over $A^2$ as a function of the leftmost symbol of a word over $B^2$ to the right of it when separated by the single symbol $c \in C$, by the same formula, but using the abelian structure of $A^2$ instead of that of $B^2$, and the function $c^{-1} : B^2 \to A^2$ instead of $c$.

We now show that $G_A$ and $G_B$ indeed give the free product of $\Aut(A^\Z)$ and $\Aut(B^\Z)$. Suppose thus that
\[ f = f_{k-1} \circ f_{k-2} \circ \cdots \circ f_1 \circ f_0 \in \Aut(S^\Z) \]
where $k \geq 1$, and $f_i \in G_A \setminus \{\ID\}$ for even $i$ and $f_i \in G_B \setminus \{\ID\}$ for odd $i$. We need to show $f \neq \ID$. Suppose the minimal radius of $f_i$ is $r_i \geq 0$. We show that $f(x)_0$ depends on at least the cell $x_h$ where $h = \sum_i r_i + 2k$. Clearly this will imply that $f \neq \ID$.

For this, we will define two points
\[ x = y \; . \; u_{k-1} \cdot c_{k-2} u_{k-2} \cdots c_1 u_1 \cdot c_0 u_0 \cdot z \]
\[ x' = y \; . \; u_{k-1} \cdot c_{k-2} u_{k-2} \cdots c_1 u_1 \cdot c_0 u_0' \cdot z \]
where $y \in C^{-\N}$, $z \in C^\N$, $u_i \in (A^2)^+$ if $i$ is even, $u_i \in (B^2)^+$ if $i$ is odd, $|u_0'| = |u_0|$, $(u_0)_i = (u_0')_i$ for $i \neq |u_0|-1$, and $c_i \in C$ for all $i$. The choices of $y$ and $z$ are arbitrary, and $|u_i| = r_i$ for all $i$. We will choose the words $u_i$ and symbols $c_i$ carefully so that $f(x)_0 \neq f(x')_0$, while $x_i = x_i'$ for all $i \neq h$. Write $I_i = [\alpha_i, \beta_i]$ for the interval where $u_i$ occurs in $x$.

Let $i \in [0,k-1]$ be even, and let $v_i$ and $v_i'$ be words of length $2r_i + 1$ with $(v_i)_j = (v_i')_j$ for $j \in [1,2r_i-1]$ such that $f_i(v_i) \neq f_i(v_i')$. We then have $(v_i)_0 \neq (v_i')_0$ or $(v_i)_{2r_i} \neq (v_i')_{2r_i}$, and if $r_i > 0$ we also suppose $(v_i)_0 = (v_i')_0$ or $(v_i)_{2r_i} = (v_i')_{2r_i}$. If $(v_i)_0 \neq (v_i')_0$, $i$ is called a \emph{left dependence} and otherwise a \emph{right dependence}.

If $i$ is a left dependence, define the words $t_i, t_i' \in (A^2)^{r_i+1}$ by
\[ (t_i)_j = \left\{\begin{array}{ll}
((v_i)_{r_i + j + 1}, (v_i)_{r_i - j}) & \mbox{if } j < r_i \\
(a, (v_i)_0) & \mbox{if } j = r_i,  \\
\end{array}\right.
\]
where $a$ is arbitrary, and $t_i'$ by the same formula using the word $v_i'$. The word $t_i$ is a conveyor belt version of $v_i$ where the center of $v_i$ is at the left end of $t_i$ and the leftmost coordinate is at the right end of $t_i$.

The important property of $t_i$ and $t_i'$ is that they agree apart from their rightmost coordinates, but after applying $f_i$ to the conveyor belt, the images differ in the leftmost coordinate, that is,
\[ (t_i)_{[0,r_i-1]} = (t_i')_{[0,r_i-1]} \wedge (f_i)_{|t_i|}(t_i)_0 \neq (f_i)_{|t_i|}(t_i')_0. \]
If $i$ is a right dependence, we produce words $t_i$ and $t_i'$ with this property in a similar fashion.

Choose $c_i$ so that $c_i((f_i)_{|t_i|}(t_i)_0) = 0$ and
\[ c_i((f_i)_{|t_i|}(t_i')_0) = (t_{i+1}')_{r_{i+1}} - (t_{i+1})_{r_{i+1}}. \]

For odd coordinates $i$, we choose words $t_i, t_i' \in (B^2)^+$ and $c_i \in C$ similarly, switching the roles of $A$ and $B$.

We let $u_0 = t_0$, $u_0' = t_0'$. The words $u_i$ will be chosen inductively in such a way that
\[ f_{i-1} \circ \cdots \circ f_1 \circ f_0 (x)_{I_i} = t_i, \]
\[ f_{i-1} \circ \cdots \circ f_1 \circ f_0 (x')_{I_i} = t_i', \]
and
\[ f_{i-1} \circ \cdots \circ f_1 \circ f_0 (x)_{I_j} = f_{i-1} \circ \cdots \circ f_1 \circ f_0 (x')_{I_j} \]
for $j > i$.

To see this is possible, observe that information travels only from right to left over $C$-symbols when maps from $\langle G_A, G_B \rangle$ are applied, and a map $f \in (G_A \cup G_B)^k$ will move information over at most $k$ symbols in $C$. Thus, when applying our maps ${g_\ell} = f_{\ell-1} \circ \cdots \circ f_1 \circ f_0$ to the points $x, x'$, we will automatically have $g_{\ell}(x)_{I_j} = g_{\ell}(x')_{I_j}$ for all $j > \ell$. It follows that it is enough to show that the $u_i$ can be chosen so that $g_{\ell}(x)_{I_\ell} = t_\ell$ for all $\ell$: by the choice of the symbols $c_i$, we will then automatically have $g_{\ell}(x')_{I_\ell} = t'_\ell$.

But naturally we can choose such words $u_i$ by induction on $i$, since each of the maps $g_\ell$ is reversible and information travels only to the left over symbols in $C$. 
\end{proof}

In \cite{BoLiRu88}, it is shown that $\Z_2 * \Z_2 * \Z_2$ embeds in $\Aut(X)$ for a full shift $X$, and thus also the two-generator free group does. More generally, it is known that every free product of finitely many finite groups embeds in $\Aut(X)$ for a full shift $X$. In \cite{KiRo90}, this is attributed to R. C. Alperin. Combining the previous theorem and Proposition~\ref{prop:Finites} gives a new proof of this result.

\begin{corollary}
If $|S| \geq 2$, every free product of finite groups embeds in $\Aut(S^\Z)$.
\end{corollary}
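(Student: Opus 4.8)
The plan is to combine three ingredients that are already in place: Proposition~\ref{prop:Finites}, the (unlabelled) lemma stating that a group contained in $\End(X)$ is automatically contained in $\Aut(X)$, and Theorem~\ref{thm:FreeProducts}, together with the free-product analogue of Lemma~\ref{lem:ClosureStuff}. The whole statement is a bookkeeping combination of these; the substantive work has already been done in Theorem~\ref{thm:FreeProducts}.

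First I would observe that a finite group is in particular a finite monoid, so Proposition~\ref{prop:Finites} supplies an embedding of each finite group $G$ into $\End(S^\Z)$. Since $G$ is a group, the lemma asserting that $G \leq \End(X)$ implies $G \leq \Aut(X)$ upgrades this to an embedding into $\Aut(S^\Z)$. Hence every finite group belongs to the class $\subg(\Aut(S^\Z))$. Next, Theorem~\ref{thm:FreeProducts} says that $\subg(\Aut(S^\Z))$ is closed under binary free products, and by the free-product version of Lemma~\ref{lem:ClosureStuff} this is equivalent to closure under countable free products. Therefore, given any at most countable family of finite groups $(G_i)$, all of which lie in $\subg(\Aut(S^\Z))$ by the first step, their free product $\ast_i G_i$ again lies in $\subg(\Aut(S^\Z))$, which is exactly the desired embedding into $\Aut(S^\Z)$.

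The one point requiring a little care is the passage from binary to countable, and in particular heterogeneous, free products: the factors $G_i$ need not be mutually isomorphic, whereas Lemma~\ref{lem:ClosureStuff} is phrased in terms of powers $M^{*\omega}$ of a single monoid. I would resolve this by invoking the closure formulation of that lemma (``$\subg$ is closed under finite, resp.\ countable, free products'') rather than its single-monoid formulation, so that distinct finite factors are permitted. I would also note that, since every monoid here is by definition a countable set, a free product of finite groups automatically involves at most countably many nontrivial factors, so no case beyond the countable one can arise. Beyond this remark there is no real obstacle, as the entire difficulty of the statement is absorbed into Theorem~\ref{thm:FreeProducts}.
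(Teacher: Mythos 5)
Your proof is correct and is essentially the paper's own argument: the paper derives the corollary by exactly this combination of Proposition~\ref{prop:Finites} (upgraded from $\End$ to $\Aut$ via the unlabelled lemma) with Theorem~\ref{thm:FreeProducts}, relying on Lemma~\ref{lem:ClosureStuff} to pass from binary to finite or countable (possibly heterogeneous) free products. Your extra remark on the countable, non-isomorphic-factors case is just a careful spelling-out of what the paper leaves implicit.
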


\section{Embeddings between endomorphism groups}
\label{sec:SoficContainsFull}

In \cite{KiRo90}, it was shown that automorphism groups of full shifts embed in those of transitive SFTs. With essentially the same proof, we show that endomorphism monoids of full shifts embed in those of uncountable sofic shifts, equivalently in ones with positive entropy. The proof relies on a number of basic properties of sofic shifts, which can be found in \cite{LiMa95,Ki98}.

\begin{lemma}
\label{lem:SoficContainsFull}
Let $X$ be any uncountable sofic shift, and $A^\Z$ any full shift. Then $\End(A^\Z) \leq \End(X)$.
\end{lemma}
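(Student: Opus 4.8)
The plan is to realize $\End(A^\Z)$ by the conveyor-belt mechanism of Theorem~\ref{thm:DirectProducts}, but carried out \emph{inside} the uncountable sofic shift $X$, using an embedded free pair of blocks as the medium on which conveyor belts are written. Since a full shift is itself sofic, this also subsumes Lemma~\ref{lem:SameSubmonoids}.

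First I would isolate the structural input. Since $X$ is uncountable sofic it has positive entropy, so in a right-resolving presentation some irreducible component carries positive entropy and therefore contains two distinct cycles through a common vertex $v$. Reading the edge labels and passing to a common length gives words $u_0 \neq u_1$ over the alphabet of $X$ with $|u_0| = |u_1| = \ell$, both labelling loops at $v$. The two consequences I want are: (i) \emph{free interchangeability} --- in any context where the path state at a given position is $v$, either loop may be substituted without affecting legality elsewhere; and (ii) a \emph{synchronizing marker}: passing to the Fischer cover of the irreducible component, there is a magic word $m$, any occurrence of which forces the path to end at $v$, and which (after lengthening via the unbordered-marker method) occurs in a point of $X$ only at its intended positions. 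Thus between two consecutive markers $m$ one may write any word in $\{u_0,u_1\}^*$, and this content is freely editable block by block.

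Next I would encode conveyor belts. Fix $b = \lceil \log_2 |A|^2 \rceil$ and a bijection of $A^2$ with binary strings of length $b$, each bit represented by $u_0$ or $u_1$. A length-$k$ belt in $\mathrm{Conv}_{A,k} = (A^2)^k$ is then the run of $bk$ loop-blocks between two markers. For $f \in \End(A^\Z)$ I define $\hat f : X \to X$ locally: on each maximal marker-delimited region whose block content decodes cleanly to some $w \in \mathrm{Conv}_{A,k}$, replace it by the encoding of $f_k(w)$; on markers and on any non-conforming region, act as the identity. Because the replacement only reassigns loop choices at positions whose path state is $v$, and leaves markers and surrounding content untouched, the image is again a point of $X$ by free interchangeability. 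The map $\hat f$ commutes with $\sigma$ and, by the uniform continuity of the conveyor-belt maps (Lemma~\ref{lem:ConveyorBelts}), is uniformly continuous on the dense shift-invariant set of points whose marker runs are all finite; the extension lemma for uniformly continuous shift-commuting maps then yields a genuine cellular automaton on $X$.

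Finally, $f \mapsto \hat f$ is a homomorphism because $\hat f$ edits only $A$-data, never markers or block structure, so on each clean region $\widehat{f\circ g}$ and $\hat f\circ\hat g$ agree via $(f\circ g)_k = f_k\circ g_k$ (Lemma~\ref{lem:ConveyorBelts}), while both are the identity elsewhere; and it is injective because $f\neq g$ forces $f_k(w)\neq g_k(w)$ for some $w$, which $\hat f$ and $\hat g$ then exhibit on a configuration carrying the encoding of $w$ between two markers. I expect the structural lemma, not the coding, to be the main obstacle: the two equal-length loops are routine, but arranging an unambiguously and locally detectable marker in an \emph{arbitrary} (possibly reducible) uncountable sofic shift, and verifying that editing a clean region neither creates nor destroys a marker occurrence nor violates the transition at a region boundary, requires the synchronizing-word and unbordered-marker machinery. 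Each of these checks ultimately reduces to the single fact that every edited position sits at path state $v$, where both loops are legal continuations.
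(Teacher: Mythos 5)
Your overall strategy (unbordered synchronizing markers delimiting freely interchangeable blocks, conveyor belts applied to the decoded content) is the same as the paper's, but two steps fail as written. The first is the legality of editing. Your claim (i) and your closing assertion that ``every edited position sits at path state $v$'' are justified only for points whose presentation passes through the cover of the irreducible component $Z$. But $X$ is not assumed transitive, and $\hat f$ must be defined on all of $X$: a point of $X$ can contain the marker and loop-blocks while lying entirely outside $Z$, and then the magic word $m$ (which is synchronizing only inside the Fischer cover of $Z$) forces nothing about its presentations. Concretely, take $X \subset \{0,1,2\}^\Z$ consisting of $\{0,1\}^\Z$ together with all points having a single symbol $2$, arbitrary $\{0,1\}$-content to its left, and $11$-free content to its right. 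Then $Z = \{0,1\}^\Z$ with loops $u_0 = 0$, $u_1 = 1$ at the single vertex $v$; if your marker $m$ happens to avoid the factor $11$ (nothing in your construction prevents this), it occurs to the right of the $2$, where an edit replacing $u_0$-blocks by $u_1$-blocks can create the factor $11$ and push the point out of $X$. The paper closes exactly this hole with the syntactic monoid of $X$ itself: the interchangeable words $wUw$ are chosen to represent the \emph{same element of the syntactic monoid of $X$} (not of $Z$), so substitution is legal in every context occurring in $X$, whatever part of the presentation a point lives in; $Z$, its transitivity and its entropy are used only to manufacture enough such words, while the legality argument is global.

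The second gap is locality. You place markers only at the two ends of a belt, with $bk$ loop-blocks in between; since $k$ is unbounded, consecutive markers lie at unbounded distance. A cell in the middle of a long region sees no marker within any fixed radius, so it cannot determine its phase relative to the block structure (the factorization of a long word in $\{u_0,u_1\}^*$ need not even be unique, e.g.\ $u_0 = 01$, $u_1 = 10$), nor whether the region ``decodes cleanly.'' Consequently the map you define is not uniformly continuous --- whether $\hat f$ changes $x_i$ depends on content arbitrarily far from $i$ --- and the extension lemma cannot be invoked to produce a cellular automaton. The paper avoids this by interleaving the marker $w$ between \emph{every} pair of consecutive data blocks, as in $w u_{i_1} w u_{i_2} w \cdots w u_{i_\ell} w$, so that every cell lies within bounded distance of a marker and of all the data it needs; this suffices because the conveyor-belt maps have a radius independent of the belt length (Lemma~\ref{lem:ConveyorBelts}). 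Both defects are repairable, and the repaired argument is essentially the paper's proof.
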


\begin{proof}
It is easy to see that for a sofic shift, uncountability is equivalent to having positive entropy.

The \emph{syntactic monoid} of a subshift $X$ is the monoid whose elements are equivalence classes of words in $X$ under the equivalence $w \sim w' \iff \forall u, v: uwv \sqsubset X \iff uw'v \sqsubset X$. Sofic shifts are characterized as the subshifts with a finite syntactic monoid. If $X$ has positive entropy, then its minimal SFT cover $Y$ also does. Then $Y$ has a transitive component with positive entropy, and its image in the covering map is a positive-entropy transitive sofic subshift $Z$ of $X$.

Let $k$ be such that if $u, v \sqsubset Z$ then $uav \sqsubset Z$ for some $a$ with $|a| \leq k$ -- such $k$ exists because $Z$ is transitive and because its syntactic monoid is finite. Let $m$ be such that $Z$ contains at least $n = |A^2|$ words of length $m$ representing the same element of the syntactic monoid of $X$ -- such $m$ exists because the syntactic monoid of $X$ is finite, and because $Z$ has positive entropy. Let $w'$ be a \emph{synchronizing word} in $Z$ of some length $\ell$, that is, such that
\[ u_1 w', w' u_2 \sqsubset Z \implies u_1 w' u_2 \sqsubset Z. \]
Such a word can be found in any sofic shift \cite{Ki98}.

In every aperiodic infinite word, one can find unbordered words of arbitrary length \cite{Lo02}. Take any configuration in $Z$ which is aperiodic and where $w'$ appears syndetically, to find an unbordered word $w \sqsubset Z$ of length at least $2k+m+1$ containing the word $w'$. Then $w$ is synchronizing, since it contains a synchronizing subword.

By the assumptions on $k$ and $m$, there is a set of words $wUw$ where $U \subset S^p$ for some $p \leq {2k+m}$ and $|U| = n$ such that two words in $wUw$ can only overlap nontrivially by sharing the subword $w$, and all words in $wUw$ represent the same element of the syntactic monoid of $X$. Since all words in $wUw$ occur in $Z$ and $w$ is synchronizing, the language $(wU)^*w$ is contained in the language of $Z$.

Now, let $U = \{u_1, \ldots, u_n\}$, and fix a bijection $\phi : [1, n] \to A^2$. Given a CA $f : A^\Z \to A^\Z$, the embedding is now constructed as in the previous sections: if we have a maximal finite subword of the form
\[ w u_{i_1} w u_{i_2} w u_{i_3} w \cdots w u_{i_\ell} w \]
(note that $U$-subwords of two such words cannot overlap by the assumptions), we apply $f$ to the corresponding conveyor belt and let $v = f_{\ell}(\phi(i_1) \phi(i_2) \cdots \phi(i_\ell))$. We rewrite the word $w u_{i_1} w \cdots w u_{i_\ell} w$ by
\[ w u_{\phi^{-1}(v_1)} w u_{\phi^{-1}(v_2)} w u_{\phi^{-1}(v_3)} w \cdots w u_{\phi^{-1}(v_\ell)} w. \]

As in the previous proofs, it is easy to check that this gives an embedding of $\End(A^\Z)$ into $\End(X)$.
\end{proof}

The converse $\End(X) \leq \End(A^\Z)$ is not true in general for positive-entropy sofic shifts $X$:

\begin{example}
The group $\Aut(A^\Z)$ is residually finite \cite{BoLiRu88}. We show that $\End(X)$ need not be residually finite for a sofic shift $X$. For this, let
\[ X_k = \{x \in \{0,1\}^\Z \;|\; |\{i \in \Z \;|\; x_i = 1\}| \leq k\}. \]
Then $\Aut(X_2)$ contains a copy of the group of all permutations of $\N$ with finite support, by permuting the (orbits of) isolated points. This group is not residually finite, so $X_2 \times Y$ is not residually finite for any subshift $Y$. In particular, by letting $Y$ be a positive-entropy sofic shift we obtain the result. \qee
\end{example}

Even assuming transitivity, we are not aware of a general technique of embedding $\End(X)$ into $\End(Y)$ for two sofic shifts, and this seems tricky to do even for particular examples.

\begin{question}
Let $X$ and $Y$ be two mixing SFTs. When do we have $\Aut(X) \leq \Aut(Y)$?
\end{question}

\section{Decidability}


In this section, we briefly discuss some decidability corollaries for cellular automata that follow from the constructions. We fix the local rule of a CA as its computable presentation. This allows us to ask decidability questions about cellular automata. We start with a lemma that shows that the translation between cellular automata and their local rules is completely algorithmic. We omit the standard proof.


\begin{lemma}
Let $X$ be a sofic shift. Then given a function $F : S^{2r+1} \to S$, it is decidable whether $f(x)_i = F(x_{[i-r,i+r]})$ defines a cellular automaton on $X$, and if it does, we can compute a minimal radius $r' \in \N$ and a local rule $F' : \B_{2r'+1}(X) \to S$ for $f$.
\end{lemma}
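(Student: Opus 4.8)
The plan is to reduce every part of the statement to elementary decision procedures for regular languages, using the characterization (recalled in the proof of Lemma~\ref{lem:SoficContainsFull}) that a subshift is sofic iff its language is recognized by a finite device. From whatever presentation of $X$ we are given (a labeled graph, or its finite syntactic monoid) we can effectively produce a finite automaton recognizing $\B(X)$; in particular each $\B_n(X)$ is a computable finite set.

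First I would note that the formula $f(x)_i = F(x_{[i-r,i+r]})$ always defines a continuous, shift-commuting self-map of the full shift $S^\Z$, so the only thing that can fail is the inclusion $f(X) \subseteq X$: the map is a cellular automaton on $X$ precisely when $f(X) \subseteq X$. Since a sliding block code sends sofic shifts to sofic shifts effectively, I would compute a presentation of the image $f(X)$. Passing to the higher block shift $X^{[2r+1]}$ (conjugate to $X$, with a presentation computable from that of $X$) turns $f$ into a $1$-block relabeling; relabeling the output symbols of a sofic presentation of $X^{[2r+1]}$ through $F$ yields a presentation of $f(X)$, hence a finite automaton for the regular language $\B(f(X))$. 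The question $f(X)\subseteq X$ then becomes the inclusion $\B(f(X)) \subseteq \B(X)$ of two regular languages, which is decidable by the standard product-and-emptiness construction. If the inclusion holds, $f$ is a CA on $X$; otherwise it is not.

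Next, assuming $f$ is a CA on $X$, I would compute the minimal radius by a finite downward search. For each $r' \in \{0,1,\dots,r\}$ I would test whether $F$, restricted to words that actually occur in $X$, depends only on the central $(2r'+1)$-window, i.e.\ whether $F(u)=F(u')$ for all $u,u'\in\B_{2r+1}(X)$ with $u_{[r-r',\,r+r']}=u'_{[r-r',\,r+r']}$. As $\B_{2r+1}(X)$ is a computable finite set, each test is a finite check, and the smallest passing $r'$ is the minimal radius (no valid radius can exceed the given $r$). For that $r'$ I would define $F':\B_{2r'+1}(X)\to S$ by $F'(w)=F(u)$ for any $u\in\B_{2r+1}(X)$ whose central $(2r'+1)$-window is $w$; such $u$ exists because the language of a subshift is extendable, and $F'$ is well-defined exactly because $r'$ passed the test. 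By construction $f(x)_i = F'(x_{[i-r',\,i+r']})$ for all $x\in X$, so $F'$ is a local rule for $f$ of radius $r'$.

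The only genuinely nonelementary ingredient is the effective computation of a sofic presentation of $f(X)$ and the reduction of the CA-test to regular-language inclusion; once these standard facts (image of a sofic shift is effectively sofic, decidability of sofic containment, regularity and computability of $\B(X)$) are in hand, the minimality argument and the construction of $F'$ are routine bookkeeping.
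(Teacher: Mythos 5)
Your proof is correct, and since the paper explicitly omits this proof as ``standard,'' your argument is exactly the kind of routine verification the author had in mind: reduce the CA test to decidable inclusion of regular languages (via an effectively computable presentation of the sofic image $f(X)$), then find the minimal radius by a finite downward search over $r' \leq r$, using extendability of $\B(X)$ to see that the collapsed rule $F'$ is well defined and total on $\B_{2r'+1}(X)$. Nothing to correct: the key observations -- that the only possible failure is $f(X) \not\subseteq X$, that valid radii are upward closed so the search terminates at the true minimum, and that every $w \in \B_{2r'+1}(X)$ is the central window of some $u \in \B_{2r+1}(X)$ -- are all in place.
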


Of course, an algorithm that minimizes the local rule implies that given two local rules $F : S^{2r+1} \to S$ and $F' : S^{2r'+1} \to S$, it is decidable whether they define the same CA.

Let $G$ be a countable group, with a fixed computable presentation for the elements. The \emph{torsion problem} is the problem of, given $g \in G$, deciding whether there exists $m > 0$ such that $g^m = 1_G$.

\begin{theorem}[\cite{Ka92}]
For some $S$, the torsion problem of $\Aut(S^\Z)$ is undecidable.
\end{theorem}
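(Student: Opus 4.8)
The plan is to reduce the torsion problem of $\Aut(S^\Z)$ to a known undecidable problem about cellular automata on full shifts, most naturally the nilpotency or periodicity undecidability results referenced as \cite{KaOl08}. The key observation is that a reversible cellular automaton $f \in \Aut(S^\Z)$ satisfies $f^m = \ID$ for some $m > 0$ precisely when $f$ is a \emph{periodic} element of the group, so the torsion problem asks exactly whether a given automorphism has finite order. The strategy is therefore to encode an undecidable dynamical property of an arbitrary (not necessarily reversible) object into the finite-order property of a reversible one.

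First I would recall the classical undecidability result that one can use as the engine: Kari--Ollinger type results establish that it is undecidable whether a given reversible cellular automaton is periodic (i.e., has finite order in $\Aut(S^\Z)$), or alternatively one reduces from the undecidability of nilpotency of a general CA. The cleanest route, which I expect is the intended one, is to take a construction that converts an instance of an undecidable problem (say, the halting problem via a Turing machine or a nilpotency instance) into a reversible CA $f$ whose order is finite if and only if the underlying machine exhibits some behavior (e.g., halts, or the CA is nilpotent). Since $\Aut(S^\Z)$ embeds into $\Aut(A^\Z)$ for any $|A| \geq 2$ with the same set of subgroups by Lemma~\ref{lem:SameSubmonoids}, and since the torsion property is preserved under the embeddings we have constructed (an element has finite order iff its image does), it suffices to exhibit a single alphabet $S$ and a computable family of reversible CA for which deciding finite order is undecidable.

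The main steps, in order, are: (1) fix the undecidable source problem and its reduction producing, from each instance, a reversible CA $g$ on some full shift; (2) verify that the reduction is computable at the level of local rules, using the earlier lemma guaranteeing that passing between local rules and cellular automata is algorithmic, so that the map from instances to presentations of elements of $\Aut(S^\Z)$ is effective; (3) argue the correctness of the equivalence ``instance is positive $\iff$ $g$ has finite order in $\Aut(S^\Z)$''; and (4) conclude that an algorithm for the torsion problem would decide the source problem, a contradiction. Throughout, one uses that $f^m = \ID$ is well-defined and checkable in principle, so the torsion problem is at least semidecidable on the ``yes'' side, which is consistent with undecidability of the full problem.

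The hard part, and the step I expect to be the genuine obstacle, is step (1) together with the correctness direction of step (3): constructing a reduction in which \emph{reversibility} of the encoded CA is maintained while the finite-order property faithfully tracks the undecidable behavior. Nilpotency-style arguments naturally produce non-reversible CA, so one must be careful either to cite a version of \cite{KaOl08} that directly concerns periodicity of reversible CA, or to wrap a non-reversible computation inside a reversible ``conveyor-belt'' or partitioned-automaton simulation so that unbounded orbit growth (infinite order) corresponds to non-halting and a collapse to a periodic orbit (finite order) corresponds to halting. Establishing that this simulation has finite order exactly on the intended instances---and not accidentally on others---is where the real work lies; the embedding and alphabet-independence machinery from the earlier sections is then only used to transport the result to arbitrary $S$ with $|S| \geq 2$.
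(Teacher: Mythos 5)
The paper does not prove this statement at all: it is an external result, quoted with the citation \cite{Ka92} (and the closely related periodicity results the paper uses elsewhere are from \cite{KaOl08}). So there is no internal proof to compare your proposal against, and the only fair question is whether your proposal would itself constitute a proof.

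As a proof it is circular. The torsion problem of $\Aut(S^\Z)$ is, by the paper's definition, exactly the problem of deciding whether a given reversible cellular automaton $f$ satisfies $f^m = \ID$ for some $m > 0$, i.e.\ whether $f$ is periodic. Your proposed ``reduction'' takes as its engine the statement that periodicity of reversible CA is undecidable --- but that engine \emph{is} the theorem to be proven, just phrased dynamically rather than group-theoretically; steps (2)--(4) of your outline (computability of the reduction, the correctness equivalence, the contradiction) are then empty packaging around an identity map. All of the mathematical content lives in what you call ``the hard part'': producing a computable family of \emph{reversible} CA whose finite-order property tracks an undecidable property of the instances. You correctly observe that nilpotency-style sources (Kari's 1992 result) do not transfer, since nilpotent CA are never reversible, and that one must instead maintain reversibility throughout the simulation --- this is precisely why Kari and Ollinger's proof reduces from immortality/periodicity of \emph{reversible} Turing machines via a delicate construction. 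That construction is the theorem, and your proposal gestures at it without supplying it. If, on the other hand, your intent was simply to invoke the known result from the literature, then your proposal agrees with what the paper does (a bare citation), but it should be presented as a citation, not as a reduction argument. One further small point: the statement only claims undecidability ``for some $S$,'' so your use of Lemma~\ref{lem:SameSubmonoids} to transport the result to all alphabets, while sound (the embeddings are computable on local rules and preserve finite order), is not needed here.
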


\begin{theorem}[\cite{SaTo12c}]
The torsion problem of $\Aut(X)$ is decidable for every zero-entropy sofic shift $X$.
\end{theorem}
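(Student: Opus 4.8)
The plan is to reduce the problem to a single effectively computable equality test. By the decidability of equality of cellular automata noted above, it suffices to compute, from the given local rule of $g$ (in particular from its radius $r$) and from a presentation of $X$, a number $N$ with the property that $g$ is torsion if and only if $g^N = \ID_X$; one then simply tests this one identity. Since $g^N = \ID_X$ already implies that $g$ is torsion, the entire content is to produce a computable bound on the order of $g$ \emph{under the assumption that $g$ is torsion at all}: if the order of every torsion automorphism of radius $r$ divides some computable $N = N(r, X)$, we are done.

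First I would invoke the structure theory of zero-entropy sofic shifts. As noted in the proof of Lemma~\ref{lem:SoficContainsFull}, a sofic shift has zero entropy exactly when it is countable. A countable sofic shift has finite Cantor--Bendixson rank, and its successive $\CB$-derivatives $X = X^{(0)} \supsetneq X^{(1)} \supsetneq \cdots \supsetneq X^{(n)} \supsetneq X^{(n+1)} = \emptyset$ are again sofic shifts that can be computed from a presentation of $X$; the last nonempty derivative $X^{(n)}$ has empty derivative and is therefore a finite union of periodic orbits. Every $g \in \Aut(X)$ restricts, with the same local rule, to an automorphism of each $X^{(j)}$, so in particular if $g$ is torsion then so is each restriction $g|_{X^{(j)}}$.

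The bound on the order is then obtained by induction on the rank $n$, peeling off one $\CB$-layer at a time. In the base case $X$ is finite, $\Aut(X)$ is a finite group, and the order of $g$ divides the computable number $|\Aut(X)|$. For the inductive step, suppose we have already produced a computable bound $N_{j+1}$ valid for torsion automorphisms of $X^{(j+1)}$. The points of the layer $X^{(j)} \setminus X^{(j+1)}$ are isolated in $X^{(j)}$ and fall into shift-orbits of two kinds. The finitely many isolated \emph{periodic} orbits are permuted by $g$ as a finite set, contributing an order dividing a computable number. The isolated \emph{aperiodic} orbits are each of the form $\Orb(p) \cong \Z$ with $p$ left- and right-asymptotic to periodic orbits lying in $X^{(j+1)}$; on such an orbit $g$ acts as an eventual translation whose speed and whose residual action are governed by the behaviour of $g$ on the asymptotic data in $X^{(j+1)}$ together with the finite transition between the two asymptotic orbits. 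The key finiteness input is that, because $g$ has bounded radius $r$ and because $X$ has a finite syntactic monoid, these aperiodic orbits break into only finitely many \emph{types}, parameterised by the two asymptotic periodic orbits and a bounded-complexity transition word (governed by constants analogous to the $k$ and $m$ of Lemma~\ref{lem:SoficContainsFull}), and $g$ permutes orbits within a type by an eventually-affine map of the gap parameter. Torsion then forbids any nonzero translation speed and bounds the residual finite permutation, so the order of $g|_{X^{(j)}}$ divides the $\lcm$ of $N_{j+1}$ with a further computable number depending only on $r$ and on these finitely many types. Setting $N = N_0$ completes the induction.

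The main obstacle is precisely this last, aperiodic, case of the inductive step: establishing the \emph{quantitative} and \emph{computable} control over the action of a bounded-radius automorphism on the infinitely many aperiodic isolated orbits of a fixed $\CB$-layer. One must show that this action factors through finitely much effectively computable data (the asymptotic periodic orbits, the bounded transition words, and an integer translation number per type), so that being torsion becomes an explicit decidable finite condition. This is where the structure theory of countable sofic shifts, together with the locality (``marker'' and conveyor-belt) arguments used throughout the paper, does the real work; the rest is bookkeeping over the finitely many $\CB$-layers.
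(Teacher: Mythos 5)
First, a point of context: the paper does not prove this statement at all --- it is imported wholesale from \cite{SaTo12c}, so there is no internal proof to compare against. Your proposal must therefore be judged against the cited work, and in spirit your framework is the right one and is essentially the one used there: zero-entropy sofic shifts are exactly the countable ones, they have finite Cantor--Bendixson rank, automorphisms preserve each derivative $X^{(j)}$, and decidability is extracted from the very rigid way a bounded-radius map can act on the isolated orbits of each layer.

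However, as a proof your text has a genuine gap, and you name it yourself: the entire mathematical content of the theorem is the claim that the action of a radius-$r$ automorphism on the (infinitely many) isolated aperiodic orbits of a $\CB$-layer factors through finitely many effectively computable ``types,'' acting eventually affinely on gap parameters, so that torsion forces zero translation speed and a computably bounded residual order. Calling the remainder ``bookkeeping'' inverts the situation: that claim \emph{is} the theorem, and deferring it means the proposal establishes neither direction of your reduction (in particular, the existence of the computable bound $N(r,X)$ is exactly what is at stake). Moreover, several supporting assertions you use as if they were free are themselves nontrivial and unproven: (a) that the $\CB$-derivatives of a countable sofic shift are again sofic and computable from a presentation (derivatives of subshifts are not ``nice'' in general; here one needs the structure theory of zero-entropy sofic shifts, e.g.\ via right-resolving presentations in which distinct cycles cannot share a vertex); (b) that a zero-entropy sofic shift has only finitely many periodic orbits, which underlies your ``finitely many isolated periodic orbits'' step; (c) that every point of a countable sofic shift is left- and right-asymptotic to periodic points in deeper layers. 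Each of these is true, but each carries real content that your argument presupposes. A minor structural remark: since torsion is already semi-decidable (enumerate $n$ and test $g^n = \ID_X$ using the minimization lemma stated in the paper), it would suffice to semi-decide \emph{non}-torsion, e.g.\ by searching for a witness of nonzero translation speed on some isolated orbit; this slightly weakens what you must prove, but it still requires precisely the quantitative control you left open.
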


Combining the theorems with Lemma~\ref{lem:SoficContainsFull} gives the following.

\begin{theorem}
Let $X$ be a sofic shift. Then the torsion problem of $\Aut(X)$ is decidable if and only if $X$ has zero entropy.
\end{theorem}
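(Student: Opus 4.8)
The plan is to prove the two directions separately; one is an immediate citation, and the essential work lies in the other.

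The \emph{if} direction is immediate: if $X$ has zero entropy, then the torsion problem of $\Aut(X)$ is decidable by the theorem of \cite{SaTo12c} quoted above. For the \emph{only if} direction I would argue the contrapositive, namely that if $X$ has positive entropy then the torsion problem of $\Aut(X)$ is undecidable. First recall, as already noted in the proof of Lemma~\ref{lem:SoficContainsFull}, that a sofic shift has positive entropy if and only if it is uncountable. Hence Lemma~\ref{lem:SoficContainsFull} applies to $X$, and, taking the full shift $S^\Z$ with $S$ the alphabet for which \cite{Ka92} establishes undecidability of the torsion problem, it yields an embedding $\Phi : \End(S^\Z) \to \End(X)$. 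Since homomorphisms preserve invertibility, $\Phi$ restricts to an injective homomorphism $\Aut(S^\Z) \to \Aut(X)$; equivalently, one applies the first lemma of the Definitions section to the subgroup $\Phi(\Aut(S^\Z)) \leq \End(X)$.

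The reduction then runs as follows. Because $\Phi$ is an injective homomorphism, an element $g \in \Aut(S^\Z)$ satisfies $g^m = \ID$ for some $m > 0$ if and only if $\Phi(g)^m = \Phi(g^m) = \ID$ for the same $m$; that is, $g$ is torsion if and only if $\Phi(g)$ is torsion. Consequently, provided $\Phi$ is \emph{effective}, in the sense that from a local rule for $g$ one can algorithmically compute a local rule for $\Phi(g)$, the assignment $g \mapsto \Phi(g)$ is a many-one reduction of the torsion problem of $\Aut(S^\Z)$ to that of $\Aut(X)$. Since the former is undecidable by \cite{Ka92}, the latter is undecidable as well, which establishes the contrapositive and completes the equivalence.

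The main obstacle is verifying the effectiveness of $\Phi$. This requires inspecting the construction in the proof of Lemma~\ref{lem:SoficContainsFull}: given a presentation of $X$, say an SFT cover together with its covering map, one must exhibit algorithms producing the constants $k$ and $m$, the synchronizing unbordered word $w$, and the conveyor-belt set $U$, all of which are finite data extractable from the finite syntactic monoid and from the language of the transitive positive-entropy component $Z$. Once these finite data are fixed, the local rule of $\Phi(g)$ is an explicit finite modification of the local rule of $g$ acting on the conveyor belts delimited by occurrences of $w$, and the (unlabelled) lemma of the Decidability section guarantees that the resulting candidate rule is a legitimate, computable description of a cellular automaton on $X$. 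With effectiveness in hand, the reduction is immediate, so I expect the only genuine verification to be this bookkeeping that the embedding can be carried out uniformly and computably in the local rule.
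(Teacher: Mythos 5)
Your proof is correct and takes exactly the paper's route: the zero-entropy direction is the quoted theorem of \cite{SaTo12c}, and the positive-entropy direction combines the equivalence of uncountability with positive entropy, the embedding of Lemma~\ref{lem:SoficContainsFull} (restricted to automorphisms), and the undecidability result of \cite{Ka92}. The one point you dwell on --- that the embedding must be effective, i.e.\ a local rule for $\Phi(g)$ must be computable from a local rule for $g$ --- is indeed the only detail the paper leaves implicit in its one-line proof, and your outline of why the construction in Lemma~\ref{lem:SoficContainsFull} is algorithmic is the right way to discharge it.
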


Another definition of the torsion problem is to decide, given a CA, whether it generates a copy of $\Z$. Next, we discuss other problems of this type, omitting the easy proofs.

\begin{proposition}
Given a finite set $F \subset \Aut(S^\Z)$ and a finite group $G$, it is decidable whether $\langle F \rangle \cong G$.
\end{proposition}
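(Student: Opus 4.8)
The plan is to exploit that $G$ is a fixed \emph{finite} group, so that $\langle F \rangle \cong G$ forces $\langle F \rangle$ to have exactly $|G|$ elements; this lets me replace a potentially undecidable finiteness test by a bounded search. The key subroutines are those supplied just before the statement: (i) the decision procedure for equality of two cellular automata given their local rules, and (ii) a procedure to compute the inverse of a reversible CA. For (ii), note that if $f \in \Aut(S^\Z)$ then $f^{-1}$ is again a CA, so I may dovetail over all candidate local rules $g$ of increasing radius, testing $f \circ g = \ID$ using (i); this search terminates and returns a local rule for $f^{-1}$. Hence from the finite set $F$ I can effectively produce local rules for the finite set $F \cup F^{-1}$.

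First I would perform a bounded closure computation. Maintain a list $L$ of pairwise-distinct elements of $\langle F \rangle$, each stored as a local rule with distinctness checked via (i), initialized to $\{\ID\}$. Repeatedly pick $a \in L$ and $g \in F \cup F^{-1}$, compute a local rule for $a \circ g$, and if it represents a CA not already in $L$, append it. Interleave this with a halting test: as soon as $|L| > |G|$, stop and output \textbf{no}. Since a list that contains $\ID$ and is closed under right multiplication by $F \cup F^{-1}$ equals $\langle F \rangle$, there are exactly two outcomes. Either $|L|$ exceeds $|G|$ and we halt, which is correct because then $|\langle F \rangle| > |G|$ and so $\langle F \rangle \not\cong G$; or the closure stabilizes with $|L| \le |G|$, at which point $L$ is exactly $\langle F \rangle$ and we have computed its entire multiplication table.

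In the second case I would finish by deciding isomorphism of the two finite groups $\langle F \rangle$ and $G$, which is decidable by brute force: enumerate all bijections $\langle F \rangle \to G$ and check whether any one respects the two multiplication tables. Termination is guaranteed because at each stage we either add a genuinely new element, of which there can be at most $|G|+1$ before we halt, or we detect that the closure is complete.

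The only conceptual point worth flagging is that one should \emph{not} attempt to decide finiteness of $\langle F \rangle$ as a first step: by the undecidability of the torsion problem, applied to a single generator $F = \{f\}$ with $\langle f \rangle$ finite iff $f$ has finite order, finiteness of $\langle F \rangle$ is itself undecidable. The whole point of the argument is that the fixed finite target $G$ caps the search at $|G|+1$ elements, which sidesteps this obstruction entirely; this is the one place where the hypothesis that $G$ is finite is essential.
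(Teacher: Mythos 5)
Your argument is correct and complete. The paper itself omits the proof of this proposition (it is flagged as one of the "easy proofs"), so there is nothing to compare line by line, but what you give is essentially the intended argument: the equality test supplied by the preceding lemma, a closure computation capped at $|G|+1$ elements, and brute-force isomorphism checking of two finite multiplication tables. Your dovetailing search for an inverse local rule is sound (it terminates because $f^{-1}$ is itself a CA when $f \in \Aut(S^\Z)$), and your closing observation that the fixed finite target $G$ is what makes the problem decidable—since finiteness of $\langle F \rangle$ alone is undecidable by the torsion result—is exactly the right point to flag. One small simplification you could make: computing inverses is unnecessary. A finite submonoid of a group that contains the identity is automatically a subgroup (left multiplication by any element is injective, hence surjective, on a finite set), and the monoid generated by $F$ is finite if and only if $\langle F \rangle$ is, in which case they coincide; so closing $L$ under right multiplication by $F$ alone, with the same cutoff $|L| > |G|$, already yields either $\langle F \rangle$ or a correct negative answer.
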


Abelianness is also easy to check.

\begin{proposition}
Given a finite set $F \subset \Aut(S^\Z)$, it is decidable whether $\langle F \rangle$ is abelian.
\end{proposition}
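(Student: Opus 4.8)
The plan is to reduce the question of abelianness of $\langle F \rangle$ to finitely many equality checks between concrete cellular automata, each of which is decidable by the preceding lemma. The key observation is that a group generated by a finite set $F = \{f_1, \dots, f_n\}$ is abelian if and only if all the generators pairwise commute, i.e.\ $f_i \circ f_j = f_j \circ f_i$ for all $i, j$. This is a standard group-theoretic fact: if the generators commute, then by induction every element of $\langle F \rangle$ is a product of generators (and their inverses) that can be freely reordered, so the whole group is abelian; conversely if the group is abelian then certainly the generators commute. Since $F$ is finite, there are only $\binom{n}{2}$ such pairs to test.

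The second ingredient is that each commutativity check is algorithmic. Given $f_i, f_j \in \Aut(S^\Z)$ specified by local rules, I can compute local rules for the compositions $f_i \circ f_j$ and $f_j \circ f_i$ (composition of local rules is a purely mechanical operation: if $f_i$ has radius $r_i$ and $f_j$ has radius $r_j$, then $f_i \circ f_j$ has a local rule of radius $r_i + r_j$ obtained by substitution). By the lemma stating that equality of two CA given by local rules is decidable -- which follows from the ability to compute and compare minimal radii and minimal local rules -- I can decide whether $f_i \circ f_j = f_j \circ f_i$. Running this test over all pairs and returning ``abelian'' precisely when every test succeeds yields the decision procedure.

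I would organize the proof as follows. First, state and justify the reduction to pairwise commutativity of generators, citing the elementary group theory. Second, note that the elements of $F$ are invertible (they lie in $\Aut(S^\Z)$), but observe that abelianness of $\langle F \rangle$ does not even require us to reason about inverses for the reduction, since commuting generators force commuting inverses automatically. Third, invoke the decidability of CA equality from the earlier lemma to conclude that each of the finitely many checks terminates, and hence so does the whole procedure.

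I do not expect any genuine obstacle here; this is one of the ``easy proofs'' the paper explicitly says it omits. The only point requiring a sentence of care is the reduction step, specifically the direction that pairwise-commuting generators yield an abelian group: one should note that commutativity is preserved under taking inverses (if $ab = ba$ then $a^{-1}b = b a^{-1}$, etc.) and under products, so that the set of elements commuting with all of $F$ is a subgroup containing $F$, hence all of $\langle F \rangle$, giving that $\langle F \rangle$ is abelian.
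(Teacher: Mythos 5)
Your proposal is correct and is exactly the standard argument the paper alludes to when it omits the ``easy proofs'': reduce abelianness to the finitely many pairwise commutativity checks $f_i \circ f_j = f_j \circ f_i$ (valid since the centralizer of $F$ is a subgroup, so commuting generators generate an abelian group), and decide each check by computing local rules for the two compositions and invoking the paper's lemma on deciding equality of cellular automata from their local rules. No gaps; this matches the intended proof.
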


Nevertheless, combining the result of \cite{Ka92} with Theorem~\ref{thm:DirectProducts} and the fundamental theorem of abelian groups we see that it is impossible to check \emph{which} abelian group is generated by a finite set of CA. 

\begin{proposition}
Let $G$ be any infinite finitely-generated abelian group. Then given a finite set $F \subset \Aut(S^\Z)$, it is undecidable whether $\langle F \rangle \cong G$.
\end{proposition}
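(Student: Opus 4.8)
The plan is to reduce the torsion problem of $\Aut(S^\Z)$, which is undecidable by \cite{Ka92}, to the isomorphism problem $\langle F \rangle \cong G$. By the fundamental theorem of finitely generated abelian groups, write $G \cong \Z^r \times T$ with $T$ finite abelian, where the free rank satisfies $r \geq 1$ because $G$ is infinite. Set $G' = \Z^{r-1} \times T$, a \emph{fixed} finitely generated abelian group not depending on the input. Before describing the reduction, I would first transport the undecidability of \cite{Ka92} to the prescribed alphabet: Kari's theorem gives undecidability for a particular alphabet $S_0$, but since the embeddings underlying Lemma~\ref{lem:SameSubmonoids} are given by explicit local rules and embeddings preserve the order of elements, the torsion problem of $\Aut(S^\Z)$ is undecidable for every $S$ with $|S| \geq 2$.

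Fixing such an $S$, I would then fix once and for all an embedding of $G'$ into $\Aut(S^\Z)$ together with an explicit finite generating set $F_0$ realizing it. This is available: $\Z$ is generated by an infinite-order automorphism such as $\sigma$, the finite group $T$ embeds by Proposition~\ref{prop:Finites}, and the product $G' = \Z^{r-1} \times T$ embeds by the closure under direct products of Theorem~\ref{thm:DirectProducts}; all of these embeddings are presented by local rules, so $F_0$ is computable.

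Given an input automorphism $g \in \Aut(S^\Z)$ for the torsion problem, I would form the subgroup $G' \times \langle g \rangle$ inside $\Aut(S^\Z)$, again by the direct-product closure of Theorem~\ref{thm:DirectProducts}, and let $F$ be the union of the image of $F_0$ in the first factor and the image of $g$ in the second. Since in a direct product the subgroup generated by generators of the two factors is the product of the two generated subgroups, and since the factor maps are embeddings, $\langle F \rangle \cong G' \times \langle g \rangle$. The two cases now separate by free rank: if $g$ has infinite order then $\langle g \rangle \cong \Z$ and $\langle F \rangle \cong \Z^{r-1} \times T \times \Z \cong G$, whereas if $g$ has finite order then $\langle g \rangle$ is finite and $\langle F \rangle \cong \Z^{r-1} \times (\text{finite})$ has free rank $r-1 \neq r$, hence $\langle F \rangle \not\cong G$ by the uniqueness of the rank. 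Thus $\langle F \rangle \cong G$ holds if and only if $g$ is non-torsion, and a decision procedure for the former would decide the torsion problem, a contradiction.

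The step needing the most care, and the main obstacle, is verifying that the entire map $g \mapsto F$ is computable: one must check that the embeddings from Theorem~\ref{thm:DirectProducts} and Proposition~\ref{prop:Finites}, and the alphabet transfer from Lemma~\ref{lem:SameSubmonoids}, can all be presented as algorithms producing local rules from local rules, so that $F$ is effectively computable from a local rule for $g$. The remaining content — that $\langle F \rangle$ is genuinely the internal direct product $G' \times \langle g \rangle$ and that free rank is an isomorphism invariant — is routine group theory.
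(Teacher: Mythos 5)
Your proposal is correct and is precisely the argument the paper has in mind: the paper omits the proof, stating only that the result follows by combining Kari's undecidability of the torsion problem with Theorem~\ref{thm:DirectProducts} and the fundamental theorem of finitely generated abelian groups, which is exactly your reduction (splitting off one $\Z$ factor, adjoining $\langle g \rangle$ via the direct-product embedding, and distinguishing the two cases by free rank). Your additional care about computability of the embeddings and the transfer of Kari's result to arbitrary alphabets via Lemma~\ref{lem:SameSubmonoids} fills in details the paper leaves implicit, and they go through since all constructions are given by explicit local rules.
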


Finally, combining the result of \cite{Ka92} with Theorem~\ref{thm:FreeProducts}, we obtain undecidability of free products.

\begin{theorem}
Given $f, g \in \Aut(S^\Z)$, it is undecidable whether $\langle f, g \rangle \cong F_2$.
\end{theorem}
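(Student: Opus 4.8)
The plan is to reduce the torsion problem of $\Aut(S^\Z)$, undecidable by \cite{Ka92}, to the problem of deciding whether two given automorphisms generate a copy of $F_2$. As noted earlier in the text, the torsion problem is equivalent to deciding, for a given $h \in \Aut(S^\Z)$, whether $\langle h \rangle \cong \Z$, i.e.\ whether $h$ has infinite order.

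First I would fix, once and for all, an element $\tau \in \Aut(S^\Z)$ of infinite order; the shift $\sigma$ serves, since $\sigma^m = \ID$ is impossible for $|S| \geq 2$, so that $\langle \tau \rangle \cong \Z$. Given an instance $h$ of the torsion problem, I would apply the construction in the proof of Theorem~\ref{thm:FreeProducts} to the two cyclic subgroups $\langle h \rangle$ and $\langle \tau \rangle$: taking disjoint copies $A, B$ of $S$, placing $h$ in the first factor and $\tau$ in the second, the construction yields an embedding $\Phi$ of $\Aut(A^\Z) * \Aut(B^\Z)$ into $\Aut(S'^\Z)$ with $S' = C \sqcup A^2 \sqcup B^2$. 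I would then set $f = \Phi(h) = h_A$ and $g = \Phi(\tau) = \tau_B$. The key point is that this reduction is effective: the conveyor-belt construction turns a local rule for $h$ into a local rule for $h_A$ algorithmically, and $g = \tau_B$ is a single fixed computable automaton.

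Because $\Phi$ is an embedding onto the free product $\Aut(A^\Z) * \Aut(B^\Z)$, and the subgroup of a free product $G * H$ generated by subgroups $G_1 \leq G$ and $H_1 \leq H$ is canonically isomorphic to $G_1 * H_1$ (immediate from the normal form theorem for free products), I would obtain
\[ \langle f, g \rangle \;\cong\; \langle h \rangle * \langle \tau \rangle \;\cong\; \langle h \rangle * \Z. \]
It then remains to check that $\langle f, g \rangle \cong F_2$ holds exactly when $h$ has infinite order. If $h$ has infinite order, then $\langle h \rangle \cong \Z$ and $\langle f, g \rangle \cong \Z * \Z = F_2$. If instead $h$ has finite order $m$, then $\langle h \rangle$ is finite cyclic and $\langle f, g \rangle \cong \Z_m * \Z$ (or $\cong \Z$ when $m = 1$); neither is isomorphic to $F_2$, since $F_2$ is torsion-free whereas $\Z_m * \Z$ contains an element of order $m \geq 2$, and $\Z$ is abelian.

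This makes $h \mapsto (f, g)$ a computable reduction from the torsion problem to the $F_2$-isomorphism problem, so undecidability of the former (for a suitable $S$) forces undecidability of the latter. The step I expect to require the most care is the effectivity claim — verifying that the construction of Theorem~\ref{thm:FreeProducts} genuinely sends a local rule for $h$ to a \emph{computable} local rule for $h_A$ — together with the structural fact that the free factors' subgroups generate their free product inside $G * H$, which is precisely what guarantees $\langle f, g \rangle$ carries no unexpected relations.
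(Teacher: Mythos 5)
Your proposal is correct and is exactly the argument the paper intends: the paper gives no detailed proof, saying only that the result follows by combining Kari's undecidability of the torsion problem with Theorem~\ref{thm:FreeProducts}, which is precisely your reduction $h \mapsto (h_A, \tau_B)$ using the effectiveness of the conveyor-belt construction and the fact that $\langle h \rangle$ and $\langle \tau \rangle$ generate $\langle h \rangle * \langle \tau \rangle$ inside the free product, with $F_2$ obtained exactly when $h$ has infinite order.
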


\section{Acknowledgements}

The author was supported by FONDECYT Grant 3150552.

\bibliographystyle{plain}
\bibliography{../../bib/bib}{}

\end{document}